\definecolor{cornsilk}{rgb}{1.0, 0.97, 0.86}
\numberwithin{equation}{section}
\DeclareSymbolFont{SY}{U}{psy}{m}{n}
\DeclareMathSymbol{\emptyset}{\mathord}{SY}{'306}
\newcommand{\overbar}[1]{\mkern 1.5mu\overline{\mkern-1.5mu#1\mkern-1.5mu}\mkern 1.5mu}
\theoremstyle{plain}
\newcommand{\thmlist}{
\renewcommand{\theenumi}{\alph{enumi}}
\renewcommand{\labelenumi}{(\theenumi)}}
\newtheorem*{bigT}{\sf Theorem}
\newtheorem{thm}{Theorem}[section]
\newtheorem{cor}[thm]{Corollary}
\newtheorem{lem}[thm]{Lemma}
\newtheorem{prop}[thm]{Proposition}
\newtheorem{defn}[thm]{Definition}
\newtheorem{rem}[thm]{Remark}
\newtheorem{ex}[thm]{Example}
\newcounter{defcounter}
\def\moverlay{\mathpalette\mov@rlay}
\def\mov@rlay#1#2{\leavevmode\vtop{%
   \baselineskip\z@skip \lineskiplimit-\maxdimen
   \ialign{\hfil$\m@th#1##$\hfil\cr#2\crcr}}}
\newcommand{\charfusion}[3][\mathord]{
    #1{\ifx#1\mathop\vphantom{#2}\fi
        \mathpalette\mov@rlay{#2\cr#3}
      }
    \ifx#1\mathop\expandafter\displaylimits\fi}
\newcommand{\cupdot}{\charfusion[\mathbin]{\cup}{\cdot}}
\newcommand{\bigcupdot}{\charfusion[\mathop]{\bigcup}{\cdot}}
\begin{document}

\title[Mackey Imprimitivity and homogeneous normal operators]{Mackey Imprimitivity and commuting tuples of homogeneous normal operators}



\author[G. Misra]{Gadadhar Misra}

\address[G. Misra]{Indian Statistical Institute, Bangalore 560 059 AND Indian Institute of Technology, Gandhinagar 382 055}
\email[G. Misra]{gm@isibang.ac.in}
\thanks{The first author would like to thank his colleagues at the department of mathematics, IIT Gandhinagar, for providing a congenial and friendly atmosphere for the conduct of his research.}
\author{E. K. Narayanan}
\address[E. K. Narayanan]{Indian Institute of Science, Bangalore 560 012}
\email[E. K. Narayanan]{naru@iisc.ac.in}

\author{Cherian Varughese}
\address[C. Varughese]{Renaissance Communications, Bangalore - 560 058}
\email[C. Varughese]{cherian@rcpl.com}
\thanks{The authors thank Jean-Louis Clerc and Adam K\'{o}ranyi for suggesting that it might be helpful to investigate the case of the product domains first.}

\keywords{imprimitivity, induced representation, homogeneous operator}

\subjclass[2020]{Primary 22D30, 22D45, 47B15}

\date{}

\dedicatory{Dedicated to the memory of K. R. Parthasarathy}
\begin{abstract}
In this semi-expository article, we investigate the relationship between the imprimitivity introduced by Mackey several decades ago and commuting $d$- tuples of homogeneous normal operators. The Hahn-Hellinger theorem gives a canonical decomposition of a $*$- algebra representation $\rho$ of $C_0(\mathbb{S})$ (where $\mathbb S$ is a locally compact Hausdorff space) into a direct sum. If there is a group $G$ acting transitively on $\mathbb{S}$ and is adapted to the $*$- representation $\rho$ via a unitary representation $U$ of the group $G$, in other words, if there is an imprimitivity, then the Hahn-Hellinger decomposition reduces to just one component, and the group representation $U$ becomes an induced representation, which is Mackey's imprimitivity theorem. We consider the case where a compact topological space $S\subset \mathbb {C}^d$ decomposes into finitely many $G$- orbits. In such cases, the imprimitivity based on $S$ admits a decomposition as a direct sum of imprimitivities based on these orbits. This decomposition leads to a correspondence with homogeneous normal tuples whose joint spectrum is precisely the closure of $G$- orbits.

\end{abstract}

\maketitle

\section{Introduction}
Let $G$ be a locally compact second countable group and $\mathbb{S}$ be a locally compact Hausdorff transitive $G$- space. Given a $*$- homomorphism $\rho$ from $C_0(\mathbb{S})$, the algebra of continuous functions  vanishing at $\infty$ on $\mathbb{S}$ to $\mathcal{L}(\mathcal{H})$, the algebra of bounded linear operators on a complex separable Hilbert space $\mathcal{H}$ and a unitary representation $U$ of the group $G$ on the same Hilbert space $\mathcal{H}$, the imprimitivity is the relationship 
\[U(g) \rho(f) U(g)^* = \rho(g\cdot f), \,\, g\in G,\,\, f\in C_0(\mathbb{S}),\]
where $g\cdot f$ is the function: $(g\cdot f)(s) = f(g^{-1}\cdot s)$, $s\in \mathbb{S}$. 

In the transitive case, $\mathbb{S}$ can be taken to be the space of cosets $G/H$ for some closed subgroup $H$ of $G$. There is a unique quasi-invariant measure (modulo equivalance) $\mu$ on $G/H$, that is, setting $g_*\mu(A):=\mu(g(A))$ for any Borel subset $A\subset \mathbb{S}$, we require that $g_*\mu$ be  mutually absolutely continuous with respect to $\mu$ for every $g\in G$. 

 A commuting $d$- tuple $\boldsymbol{N}= (N_1, \ldots ,N_d)$ of normal operators acting on a complex separable Hilbert space $\mathcal{H}$ is said to be homogeneous with respect to a group $G$ if the joint spectrum $\sigma_{\boldsymbol{N}}\subset \mathbb{C}^d$ is a $G$- space and there is a unitary representation $U$ of $G$ on $\mathcal{H}$ such that 
 \begin{align*} 
 U(g)^* \boldsymbol{N} U(g) &:= ( U(g)^* N_1U(g) , \ldots ,U(g)^* N_d U(g) ) \\ 
 &= (g_1(\boldsymbol{N}) , \ldots , g_d(\boldsymbol{N}))
 := g (\boldsymbol{N}), 
 \end{align*}
 where $g_i, 1\leqslant i \leqslant d$, are the coordinate functions of the action of $G$ on $\sigma_{\boldsymbol{N}}$, namely, \[g\cdot s:= (g_1(s), \ldots , g_d(s)).\]  

We explore the relationship of imprimitivities with commuting $d$- tuples of homogeneous normal operators. For this, we assume throughout that the $G$- space $\mathbb{S}$ is a subset of $\mathbb{C}^d$. Somewhat surprisingly, such a relationship was hinted in \cite{BM}. 

The imprimitivity theorem of Mackey has two parts, namely, that any transitive imprimitivity $(\mathbb{S}, U, \rho)$ is equivalent to a canonical imprimitivity, where $\rho(f)$ for $f\in C_0(\mathbb{S})$ is defined to be the operator $M_f$ of multiplication by $f$ on $L^2(\mathbb{S},\mu, \mathcal{H}_n)$ and $U$ is a multiplier representation on $L^2(\mathbb{S},\mu, \mathcal{H}_n)$, that is, 
\[\big (U(g) h \big )(s) = c(g, s) (g\cdot h)(s),\,\, h\in L^2(\mathbb{S},\mu, \mathcal{H}_n),\,\, g\in G,\] 
where $c:G\times \mathbb{S} \to \mathcal{U}(\mathcal{H}_n)$ is a Borel map taking values in the group of unitary operators $\mathcal{U}(\mathcal{H}_n)$ of the Hilbert space $\mathcal{H}_n$ of dimension $n$. For $U$ to be a homomorphism, the function $c$ must be a cocycle. The second part of the imprimitivity theorem asserts that such a multiplier representation is induced from a unitary representation of the subgroup $H$ acting on the Hilbert space $\mathcal{H}_n$. 

It is evident that the $d$- tuple of multiplication by coordinate functions $(M_1, \ldots , M_d)$ acting on $L^2(\mathbb{S},\mu, \mathcal{H}_n)$ is homogeneous. 
We prove that any $d$- tuple $\boldsymbol{N}$ of commuting homogeneous normal operators such that $\sigma(\boldsymbol{N})$ is a $G$- space is actually equivalent to a direct sum of several transitive 
imprimitivities that are taken to be of the canonical form without loss of generality. 

Since in the simplest of examples like that of the closed unit disc, where the group of M\"{o}bius transformations act holomorphically but the action is not transitive, we set out to study some class of imprimitivities that do not come from a transitive action. (This example was first mentioned in \cite{survey}.) We prove that, under mild hypothesis, these are direct sums of transitive imprimitivities. For a precise statement, see Corollary \ref{maincor}. We then apply this result to several examples and list all the homogeneous $d$- tuples of normal operators modulo unitary equivalence. 

In the Remark on page 225 of \cite{VSV}, it is stated that ``Theorem 6.12 gives a complete analysis of the transitive homogeneous systems of imprimitivity based on X. \ldots In view of this it has not been possible to carry out the analysis of ergodic systems of imprimitivity further than that of theorem 6.11.'' In this paper, we  identify a class of imprimtivities where neither the $G$ action is transitive nor is the spectral measure of uniform multiplicity and yet the imprimitivities are described completely modulo unitary equivalence, see Theorem \ref{multR} and Corollary \ref{maincor}. We discuss several examples where the theorem applies.

In the concluding section, we state two problems that we think might be of some interest to researchers in operator theory as well as representation theory. 

\section{Spectral theorem and the Hahn-Hellinger decomposition} 
We begin by describing $*$- representations of a commutative $C^*$-algebra. In the case of a non-unital $C^*$- algebra, a $*$- representation $\rho: C_0(\mathbb{S}) \to \mathcal L(\mathcal H)$ is said to be \emph{non-degenerate} if there is no nonzero $h \in \mathcal{H}$ such that $\rho(f) h =0$ for all $f \in C_0(\mathbb{S})$. Let $\mathcal{B}$ be the $\sigma$- algebra of all Borel sets in $\mathbb S$ and $\mathcal{P}(\mathcal{H})$ be the set of projections on a separable complex Hilbert space $\mathcal{H}$. 

\begin{defn}
     A spectral measure defined on $\mathbb{S}$ is a projection valued map $P: \mathcal{B} \rightarrow \mathcal{P}(\mathcal{H})$ such that $P(\mathbb{S})=I$ and $P(\cupdot E_k) = \sum_{k=1}^\infty P(E_k)$ for any disjoint collection of sets  $E_k$, $k=1,2, \ldots ,$ in $\mathcal{B}$, where the convergence is in the strong operator topology. The spectrum $\sigma(P)$ of a spectral measure is the complement in $\mathbb{S}$ of the union of all those open subsets $E$ of $\mathbb{S}$ such that $P(E) = 0$. 
    A spectral measure is said to be compact if $\sigma(P)$ is compact. If $P$ is a spectral measure for $(\mathbb{S}, \mathcal B)$ and $x, y \in \mathcal{H}$, then 
\[
P_{x, y}(E) \equiv\langle P(S) x, y\rangle,\, x,y \in \mathcal H;\, S\in \mathcal B,
\]
defines a countably additive measure on $\mathbb{S}$. A projection-valued measure $P$ on $(\mathbb{S}, \mathcal B)$ is called regular if each of the measures $P_{x, y}$ is regular.
\end{defn}


\begin{thm}[Corollary 1.55, \cite{GF}] 
    Suppose that $\mathbb{S}$ is a locally compact Hausdorff space, and $\rho$ is a nondegenerate $*$- representation of $C_0(\mathbb{S})$ on $\mathcal{H}$. Then there is a unique regular projection-valued measure $P$ on $\mathbb{S}$ such that $\rho(f)=\int f d P$ for all $f \in C_0(\mathbb{S})$.
\end{thm}
If $S$ is compact, then the $*$- representation $\rho$ of $C(S)$ extends to the algebra $B(S)$ of bounded Borel functions on $S$. 
\begin{thm}[Chapter IX, Theorem 1.14, \cite{JBC}] \label{thm:1.6} If $S$ is a compact topological space and $\rho: C(S) \rightarrow \mathcal{L}(\mathcal{H})$ is a $*$- representation, there is a unique spectral measure $P$ defined on the Borel subsets of $S$ such that for all $x$ and $y$ in $\mathcal{H}$, $P_{x, y}$ is a regular measure and 
$\rho(f)=\int f d P$ for all $f\in C(S)$.
\end{thm}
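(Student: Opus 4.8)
The plan is to build $P$ by polarization from a family of scalar measures produced by the Riesz representation theorem, and then to push the multiplicativity $\rho(fg)=\rho(f)\rho(g)$ down to the level of projections. First I would fix $x,y\in\mathcal H$ and observe that $f\mapsto\langle\rho(f)x,y\rangle$ is a bounded linear functional on $C(S)$ with norm at most $\|x\|\,\|y\|$, since a $*$-homomorphism between $C^*$-algebras is contractive. The Riesz representation theorem then supplies a unique regular complex Borel measure $\mu_{x,y}$ on $S$ with $\langle\rho(f)x,y\rangle=\int_S f\,d\mu_{x,y}$ and total variation $\|\mu_{x,y}\|\le\|x\|\,\|y\|$. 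Uniqueness of $\mu_{x,y}$ forces $(x,y)\mapsto\mu_{x,y}$ to be sesquilinear, and conjugating the defining identity together with $\rho(\bar f)=\rho(f)^*$ and uniqueness again gives the Hermitian symmetry $\overline{\mu_{x,y}}=\mu_{y,x}$.

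Next, for each Borel set $E$ the estimate $|\mu_{x,y}(E)|\le\|\mu_{x,y}\|\le\|x\|\,\|y\|$ shows that $(x,y)\mapsto\mu_{x,y}(E)$ is a bounded sesquilinear form, so there is a unique operator $P(E)\in\mathcal L(\mathcal H)$ with $\|P(E)\|\le 1$ and $\langle P(E)x,y\rangle=\mu_{x,y}(E)$; in particular $P_{x,y}=\mu_{x,y}$ is regular by construction. The symmetry $\overline{\mu_{x,y}}=\mu_{y,x}$ yields $P(E)=P(E)^*$, and $\mu_{x,y}(S)=\langle\rho(1)x,y\rangle=\langle x,y\rangle$ gives $P(S)=I$ (normalising $\rho$ so that $\rho(1)=I$; otherwise one works on the range of $\rho(1)$). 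Countable additivity of each scalar measure $\mu_{x,y}$ gives $\langle P(\bigcup_k E_k)x,y\rangle=\sum_k\langle P(E_k)x,y\rangle$ for pairwise disjoint Borel sets $E_k$, i.e. convergence of $\sum_k P(E_k)$ to $P(\bigcup_k E_k)$ in the weak operator topology.

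The crux is to show $P(F)P(E)=P(E\cap F)$, which forces each $P(E)$ to be a projection and disjoint sets to give orthogonal ranges. Fixing $g\in C(S)$ and using $\langle\rho(fg)x,y\rangle=\langle\rho(f)\rho(g)x,y\rangle=\int f\,d\mu_{\rho(g)x,y}$ for all $f\in C(S)$, uniqueness of measures gives $\mu_{\rho(g)x,y}=g\,\mu_{x,y}$; evaluating at $\chi_E$ gives $\langle P(E)\rho(g)x,y\rangle=\int_E g\,d\mu_{x,y}$. Applying the symmetry $\overline{\mu_{x,y}}=\mu_{y,x}$ in the second variable likewise yields $\mu_{x,\rho(\bar g)y}=g\,\mu_{x,y}$, so $\langle\rho(g)P(E)x,y\rangle=\langle P(E)x,\rho(\bar g)y\rangle=\int_E g\,d\mu_{x,y}$; hence $P(E)$ commutes with $\rho(C(S))$. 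Then $\int g\,d\mu_{P(E)x,y}=\langle\rho(g)P(E)x,y\rangle=\int_E g\,d\mu_{x,y}$ for all $g\in C(S)$, so a second appeal to uniqueness gives $\mu_{P(E)x,y}=\chi_E\,\mu_{x,y}$, and evaluating at a Borel set $F$ gives $\langle P(F)P(E)x,y\rangle=\mu_{x,y}(E\cap F)=\langle P(E\cap F)x,y\rangle$, as required.

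Finally, for pairwise disjoint $E_k$ the operators $P(E_k)$ now have orthogonal ranges and the partial sums $\sum_{k\le n}P(E_k)=P(\bigcup_{k\le n}E_k)$ are projections, so $\sum_k\|P(E_k)x\|^2=\sup_n\langle P(\bigcup_{k\le n}E_k)x,x\rangle\le\|x\|^2$, whence $\sum_k P(E_k)x$ is norm-convergent and $P$ is countably additive in the strong operator topology; thus $P$ is a regular spectral measure. The identity $\langle\rho(f)x,y\rangle=\int f\,dP_{x,y}$ is exactly $\rho(f)=\int f\,dP$. For uniqueness, if $P'$ is another such spectral measure then $\int f\,dP'_{x,y}=\langle\rho(f)x,y\rangle=\int f\,dP_{x,y}$ for all $f\in C(S)$ and all $x,y$, and since $P'_{x,y}$ and $P_{x,y}$ are regular, uniqueness in the Riesz representation theorem gives $P'_{x,y}=P_{x,y}$ for all $x,y$, i.e. $P'=P$. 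The step I expect to be the real obstacle is the transfer of multiplicativity in the third paragraph: one must manage the Hermitian symmetry $\overline{\mu_{x,y}}=\mu_{y,x}$ carefully to see that $P(E)$ commutes with the representation, and then apply uniqueness of the Riesz measures a second time in the correct variable to descend from $\rho(fg)=\rho(f)\rho(g)$ all the way to $P(F)P(E)=P(E\cap F)$.
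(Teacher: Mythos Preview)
Your proof is correct and follows essentially the same route as the paper (which reproduces the key ideas from Conway): obtain the scalar measures $\mu_{x,y}$ via the Riesz representation theorem, define an operator for each bounded Borel function (in your case just $\chi_E$) through the associated bounded sesquilinear form, and then push the multiplicativity of $\rho$ down via the uniqueness clause of Riesz. The only cosmetic difference is that the paper phrases this as constructing an extension $\tilde\rho:B(S)\to\mathcal L(\mathcal H)$ and asserting it is a $*$-representation (so that $P(E):=\tilde\rho(\chi_E)$ is automatically a projection from $\chi_E^2=\chi_E=\overline{\chi_E}$), whereas you work directly with characteristic functions and prove $P(F)P(E)=P(E\cap F)$ by hand; but the identities $\mu_{\rho(g)x,y}=g\,\mu_{x,y}$ and $\mu_{P(E)x,y}=\chi_E\,\mu_{x,y}$ that you isolate are exactly the content of the multiplicativity of $\tilde\rho$, so the two presentations coincide.
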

We reproduce a key idea of the proof of Theorem \ref{thm:1.6} above from \cite{JBC}.
If $x, y \in \mathcal{H}$, then $\Lambda_{x,y}(f):= \langle \rho(f) x , y\rangle $ is a linear functional on $C(S)$ with $\|\Lambda_{x,y} \| \leqslant  \|x\|\|y\|$. Hence there is a unique regular Borel measure $\mu_{x, y}$ such that
\[
\langle\rho(f) x, y\rangle=\int f d \mu_{x, y}
\]
for all $f$ in $C(S)$. 
Let $\phi$ be a bounded Borel function on $S$ and define $[x, y]=\int \phi d \mu_{x, y}$. Evidently, $[\cdot, \cdot]$ is a sesquilinear form and $|[x, y]| \leqslant\|\phi\|\|x\|\|y\|$. Hence there is a unique bounded operator $A_\phi$ such that $[x, y]=\langle A_\phi x, y\rangle$ and $\|A_\phi\| \leqslant\|\phi\|$. Define   
$\tilde{\rho}: B(S) \rightarrow \mathcal{L}(\mathcal{H})$ by setting $\tilde{\rho}(\phi) = A_\phi$. Thus,  
\[
\langle\tilde{\rho}(\phi) x, y\rangle=\int \phi d \mu_{x, y}.
\]
It is shown in the proof of Theorem 1.14 of \cite{JBC} that (a) $\tilde{\rho}: B(S) \rightarrow \mathcal{L}(\mathcal{H})$ is a $*$- representation and $\tilde{\rho}_{\mid\,C(S)} =\rho$ and (b) $P(E):= \tilde{\rho}\left(\chi_{E}\right)$, $E\in \mathcal B$, is a spectral measure, where $\chi_E$ is the characteristic function of $E$. 
\begin{rem}
Any $*$- representation $\rho$ of $C(S)$, $S$ compact, not only (uniquely) defines a spectral measure, it is actually the restriction of a $*$- representation of the algebra of bounded Borel functions on $S$. 
\end{rem}
Finally, using the Hahn-Hellinger theorem, one may write down any $*$- representation of $C_0(\mathbb{S})$ in a canonical form, see \cite[Theorem 3.7]{VSS}. Let $\mu$ be a $\sigma$-finite Borel measure on $\mathbb{S}$, and $\mathcal{H}_n$ denote an $n$ dimensional Hilbert space, $1 \leqslant n \leqslant \aleph_0$. The map $\pi_\mu^n$ defines a $*$- representation of $C(\mathbb{S})$ by 
setting 
\[(\pi_\mu^n(f) h)(x)=f(x) h(x),\,\, f\in C(\mathbb{S}),\,\,h\in L^2\left(\mathbb{S}, \mu ; \mathcal{H}_n\right).\]
\begin{thm} \label{Hahn}
Let $\pi: C_0(\mathbb{S}) \rightarrow \mathcal{L}(\mathcal{H})$ be a $*$- representation of $C_0(\mathbb{S})$ on a separable Hilbert space $\mathcal{H}$.
\begin{enumerate} 
\item[(a)] Then, there exists a Borel measure $\mu$ on $\mathbb S$, and a sequence $\left\{E_n\right.$ : $\left.1 \leq n \leq \aleph_0\right\}$ of pairwise disjoint Borel sets in $S$, and Hilbert spaces $\mathcal{H}_n, 1 \leqslant n \leqslant \aleph_0$, with $\operatorname{dim} \mathcal{H}_n=n$, such that $\mu\left(\mathbb{S} \setminus (\cupdot_n E_n)\right)=0$, and
\[\pi \cong \bigoplus_{1 \leq n \leq \aleph_0}
\pi_{\left .\mu\right|_{E_n}}^n.\]
\item[(b)] Further, if $\pi^{\prime}$ is another representation of $C_0(\mathbb{S})$, and if $\mu^{\prime}$ is a measure and if $\left\{E_n^{\prime}: 1 \leq n \leq \aleph_0\right\}$ is a sequence of pairwise disjoint Borel sets such that 
\[\pi^{\prime} \cong \bigoplus_{1 \leq n \leq \aleph_0} \pi_{\left.\mu^{\prime}\right|_{E_n^{\prime}}}^n,\] and if $\pi \cong \pi^{\prime}$, then $\mu \cong \mu^{\prime}$ and $\mu\left(E_n \triangle E_n^{\prime}\right)=0$ for all $n$, where $\triangle$ denotes ``symmetric difference'': $A \Delta B=(A\cap B^c) \cup(B\cap A^c)$.
\end{enumerate}
\end{thm}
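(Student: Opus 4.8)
\emph{Proof strategy.} The statement is the classical Hahn--Hellinger multiplicity theorem; the plan is to deduce it from the spectral theorem for $C_0(\mathbb S)$ stated above, followed by Hellinger's decomposition of a spectral measure into cyclic pieces and a regrouping by multiplicity. First, replace $\pi$ by the unique regular projection-valued measure $P$ on $\mathbb S$ with $\pi(f)=\int f\,dP$; nondegeneracy gives $P(\mathbb S)=I$, and by the uniqueness clause of that theorem a unitary equivalence of representations amounts to a unitary equivalence of the associated projection-valued measures, so it suffices to argue with $P$. Using separability, build a maximal family of mutually orthogonal nonzero cyclic subspaces along a decreasing chain of scalar types: pick $x_1$ whose spectral measure $\mu_1:=P_{x_1,x_1}$ dominates $P_{y,y}$ for every $y\in\mathcal H$ (a vector of maximal type, obtained by a Zorn argument on families of vectors with mutually singular spectral measures), let $\mathcal H_1$ be its cyclic subspace, pass to $\mathcal H_1^{\perp}$ and repeat. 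This yields $\mathcal H=\bigoplus_k\mathcal H_k$ with cyclic vectors $x_k$, measures $\mu_k:=P_{x_k,x_k}$ automatically satisfying $\mu_1\gg\mu_2\gg\cdots$, and with $\pi(f)x_k\mapsto f$ extending to a unitary equivalence $\pi|_{\mathcal H_k}\cong\pi^1_{\mu_k}$.

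Set $\mu:=\mu_1$ and, via Radon--Nikodym, write $d\mu_k=\phi_k\,d\mu$ with $\phi_k\geq0$; multiplication by $\phi_k^{1/2}$ gives a unitary $L^2(\mathbb S,\mu_k)\to L^2(S_k,\mu)$ intertwining the two multiplication representations, where $S_k:=\{\phi_k>0\}$, and because the $\mu_k$ decrease the $S_k$ may be taken nested, $\mathbb S=S_1\supseteq S_2\supseteq\cdots$ modulo $\mu$-null sets. Thus $\pi\cong\bigoplus_k\pi^1_{\mu|_{S_k}}$. Let $m\colon\mathbb S\to\{0,1,\dots,\aleph_0\}$ be the multiplicity function $m(x):=\#\{k:x\in S_k\}$; then $m\geq1$ $\mu$-almost everywhere, and putting $E_n:=\{m=n\}$ for $1\leq n\leq\aleph_0$ we obtain pairwise disjoint Borel sets with $\mu(\mathbb S\setminus\bigcupdot_n E_n)=0$ and $S_k=\bigcupdot_{n\geq k}E_n$ modulo $\mu$-null sets. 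Hence $\pi^1_{\mu|_{S_k}}\cong\bigoplus_{n\geq k}\pi^1_{\mu|_{E_n}}$, and on summing over $k$ the summand $\pi^1_{\mu|_{E_n}}$ occurs exactly $n$ times, giving $\pi\cong\bigoplus_{1\leq n\leq\aleph_0}\pi^n_{\mu|_{E_n}}$; this is part (a).

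For part (b) I would show that the measure class $[\mu]$ and the sets $E_n$ (modulo $\mu$-null sets) are unitary invariants of $\pi$. The measure class is intrinsic because $\mu(E)=0\iff P(E)=0$ (forward: $\mu=\mu_1$ dominates every $P_{y,y}$; backward: $\mu_1(E)=\|P(E)x_1\|^2$) while $P$ is carried by any unitary implementing $\pi\cong\pi'$ onto $P'$; hence $[\mu]=[\mu']$. For the sets, observe that in the model of part (a) the subrepresentation of $\pi$ on $P(F)\mathcal H$ is $\bigoplus_n\pi^n_{\mu|_{E_n\cap F}}$ for any Borel $F$. If $\mu(E_n\cap E_m')>0$ for some $n\neq m$, then restricting a unitary equivalence $\pi\cong\pi'$ to the range of $P(E_n\cap E_m')$ --- the same subspace on both sides, since $P$ is intrinsic and $[\mu]=[\mu']$ --- would produce a unitary equivalence $\pi^n_{\nu}\cong\pi^m_{\nu}$ for the nonzero measure $\nu:=\mu|_{E_n\cap E_m'}$. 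But such an equivalence carries the commutant of $\pi^n_{\nu}$, namely the algebra $L^\infty(\nu;\mathcal L(\mathcal H_n))$ of decomposable operators on $L^2(\mathbb S,\nu;\mathcal H_n)$, onto the commutant of $\pi^m_{\nu}$ by a $*$-isomorphism that is the identity on the common center $L^\infty(\nu)$; disintegrating over $(\mathbb S,\nu)$ then yields a measurable field of $*$-isomorphisms $\mathcal L(\mathcal H_n)\to\mathcal L(\mathcal H_m)$, which cannot exist unless $n=m$. Hence $E_n=E_n'$ modulo $\mu$-null sets for every $n$, and together with $\mu\cong\mu'$ this is part (b).

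The maximal-type construction and the bookkeeping with $\mu$-null sets above are standard; the step I expect to be the real obstacle is the uniform-multiplicity uniqueness at the end of part (b) --- recognizing the commutant of $\pi^n_\nu$ as a measurable field of full matrix algebras and reading off $n$ from it --- which needs the direct-integral (disintegration) machinery over $(\mathbb S,\nu)$. One may instead bypass it by proving directly that the chain $[\mu_1]\geq[\mu_2]\geq\cdots$ of the first paragraph is already a complete unitary invariant of $\pi$, but by either route this is the one point at which a soft argument does not suffice.
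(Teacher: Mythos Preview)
The paper does not supply its own proof of this theorem: it is stated as background and referenced to \cite[Theorem 3.7]{VSS}, so there is nothing in the paper to compare your argument against beyond the citation. Your sketch is the standard Hahn--Hellinger argument (maximal-type cyclic decomposition, Radon--Nikodym to nest the supports, regroup by the multiplicity function; then read off the measure class from $P$ and distinguish multiplicities via the commutant), and it is essentially correct as written.

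One small comment: the existence of a vector of maximal spectral type does not require Zorn in the separable case---you can take a countable dense sequence $(y_j)$, normalize so that $\sum_j\|y_j\|^2<\infty$, replace by vectors with mutually singular $P_{y_j,y_j}$ via Lebesgue decomposition, and set $x_1=\sum_j y_j$; this is more constructive and matches the standing separability hypothesis. Likewise, for the final step in (b) you can avoid the full disintegration machinery by noting that the commutant of $\pi^n_\nu$ is a type $\mathrm{I}_n$ von Neumann algebra (its center is $L^\infty(\nu)$ and each central summand is $M_n$), so $n$ is recovered as the type; this is the argument one finds, for instance, in Sunder's notes that the paper cites.
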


We need to discuss the relationship of the spectrum of the spectral measure $P$ defined on a locally compact space $\mathbb{S}$ with the (joint) spectrum of $\boldsymbol N$ (commuting tuple of normal operators) defined by $P$ as in Theorem \ref{thm:1.6}. However, there are several different ways in which one may define the spectrum of $\boldsymbol{N}$, and these are discussed below. 

Let $\boldsymbol{N} = (N_1, N_2, \cdots, N_d)$ be a commuting tuple of normal operators acting on a Hilbert space $\mathcal H$. Some of what we are going to say applies to any commuting tuple of bounded operators but we will restrict ourselves to normal operators. In this case, some of the arguments are much simpler. 

First, the {\em left spectrum} $\sigma_{\text{left}}(\boldsymbol{N})$ is defined to be those $\boldsymbol{\lambda}:=(\lambda_1, \ldots, , \lambda_d)\in \mathbb C$ such that the left ideal generated by $\left\{\left(N_1-\lambda_1\right),\left(N_2-\lambda_2\right), \cdots,\left(N_d-\lambda_d\right)\right\}$ in the unital (commutative) $C^*$- algebra $C^*(\boldsymbol{N})$ generated by $\boldsymbol{N}$ is proper. The right spectrum $\sigma_{\text{right}}(\boldsymbol{N})$ is defined similarly. 

From the Gelfand theory, see \cite[Theorem B, p. 320]{GFS}, it follows that $C^*(\boldsymbol{N})$ is $*$- isomorphic to the algebra of continuous functions $C(\mathcal{M})$ on the space $\mathcal{M}$ of multiplicative linear functionals of $C^*(\boldsymbol{N})$. Then we have  \begin{equation}
\sigma_{\text{left}}(\boldsymbol{N}) = \sigma_{\text{right}}(\boldsymbol{N}) = \left\{\left(\ell\left(N_1\right), \ldots, \ell\left(N_d\right)\right): \ell \in \mathcal M\right\} \subset \mathbb C^d, 
\end{equation}
see \cite[Lemma 2.1]{Harte} and also the remark following \cite[Corollary 4]{Bunce}. 

\begin{rem} \label{spectralperm}
It may appear that the definition of the spectrum, say, the left spectrum,  depends on the algebra $C^*(\boldsymbol{N})$ since $(\lambda_1, \ldots , \lambda_d) \not \in \sigma_{\text{left}}(\boldsymbol{N})$, only if we can find $S_1, \ldots, S_d\in C^*(\boldsymbol{N})$ such that $\sum_{j=1}^d S_j(N_j-\lambda_j) = I$. However, if $C^*(\boldsymbol{N}) \subset \mathcal A$ such that the identity of $\mathcal A$ also serves as the identity in $C^*(\boldsymbol{N})$, then the spectrum $\sigma_{\text{left}}(\boldsymbol{N}) = \sigma_{\mathcal A}(\boldsymbol{N})$, see \cite[Corollary 8]{curto}. 
A particular case that is important in what follows occurs by choosing  $\mathcal A = C^*(\boldsymbol{N})^{\prime\!\prime}$.  
Since we have $\sigma_{\text{left}}(\boldsymbol{N})= \sigma_{\text{right}}(\boldsymbol{N})$, we drop the subscripts ``left'' and ``right'' and let $\sigma(\boldsymbol{N})$ denote the spectrum of $\boldsymbol{N}$. This is known as the Harte spectrum.  
\end{rem}

A second notion of the spectrum is the {\em joint approximate spectrum} $\sigma_a(\boldsymbol{N})$: The $d$- tuple $\left(\lambda_1, \cdots, \lambda_d\right)$ of complex numbers is said to be in $\sigma_a({\boldsymbol{N}})$ if there exists a sequence  $x_n$ of unit vectors in $\mathcal{H}$  such that $\left(N_j-\lambda_j\right) x_n \rightarrow 0$, $1\leqslant j \leqslant d$. The proof of the following lemma is in \cite{CobSec}.
We provide a proof for the sake of completeness. 
\begin{lem} \label{approxspec}
    If $\boldsymbol{N}$ is a $d$- tuple of commuting normal operators, then $\sigma_{\text{left}}(\boldsymbol{N}) =  \sigma_a(\boldsymbol{N})$. 
\end{lem}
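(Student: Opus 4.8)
Proof proposal for Lemma \ref{approxspec} ($\sigma_{\text{left}}(\boldsymbol N) = \sigma_a(\boldsymbol N)$).

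The plan is to prove the two inclusions separately, using the $C^*$-algebraic characterization of $\sigma_{\text{left}}$ recalled above.

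\medskip
\emph{The inclusion $\sigma_a(\boldsymbol N) \subseteq \sigma_{\text{left}}(\boldsymbol N)$.} I would argue by contraposition. Suppose $\boldsymbol\lambda = (\lambda_1,\dots,\lambda_d) \notin \sigma_{\text{left}}(\boldsymbol N)$. Then the left ideal generated by $\{N_j - \lambda_j : 1\leqslant j\leqslant d\}$ in $C^*(\boldsymbol N)$ is all of $C^*(\boldsymbol N)$, so there exist $S_1,\dots,S_d \in C^*(\boldsymbol N)$ with $\sum_{j=1}^d S_j (N_j-\lambda_j) = I$. If $x_n$ were a sequence of unit vectors with $(N_j-\lambda_j)x_n \to 0$ for each $j$, then applying the above identity gives $x_n = \sum_{j=1}^d S_j (N_j-\lambda_j)x_n \to 0$, contradicting $\|x_n\| = 1$. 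Hence no such sequence exists, i.e. $\boldsymbol\lambda \notin \sigma_a(\boldsymbol N)$. Note this direction does not even use normality.

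\medskip
\emph{The inclusion $\sigma_{\text{left}}(\boldsymbol N) \subseteq \sigma_a(\boldsymbol N)$.} Here I would use normality and the identification $(1)$ of the Harte spectrum with the Gelfand spectrum: since $C^*(\boldsymbol N) \cong C(\mathcal M)$ with $N_j$ corresponding to the function $\ell \mapsto \ell(N_j)$, a point $\boldsymbol\lambda \in \sigma_{\text{left}}(\boldsymbol N)$ is of the form $\boldsymbol\lambda = (\ell_0(N_1),\dots,\ell_0(N_d))$ for some character $\ell_0 \in \mathcal M$. Set $T := \sum_{j=1}^d (N_j-\lambda_j)^*(N_j-\lambda_j)$, a positive element of $C^*(\boldsymbol N)$. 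Under the Gelfand isomorphism $T$ corresponds to the continuous function $\ell \mapsto \sum_j |\ell(N_j)-\lambda_j|^2$, which vanishes at $\ell_0$; hence $T$ is not invertible, so $0 \in \sigma(T)$. Since $T$ is a positive operator, $0 \in \sigma(T)$ means $T$ is not bounded below, so there is a sequence of unit vectors $x_n$ with $\langle T x_n, x_n\rangle \to 0$. But $\langle T x_n, x_n\rangle = \sum_{j=1}^d \|(N_j-\lambda_j)x_n\|^2$, so each term tends to $0$, giving $(N_j-\lambda_j)x_n \to 0$ for every $j$. Thus $\boldsymbol\lambda \in \sigma_a(\boldsymbol N)$.

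\medskip
\emph{Main obstacle.} The first inclusion is completely formal. The only real content is in the second: one must produce the approximating sequence from membership in the Harte spectrum, and the clean way to do this is to bundle the $d$ conditions $(N_j-\lambda_j)x_n\to 0$ into the single positive operator $T$ and exploit that $0\in\sigma(T)$ forces $T$ to fail to be bounded below. The point where commuting normality is genuinely used is in passing through the Gelfand picture of the commutative $C^*$-algebra $C^*(\boldsymbol N)$ to see that $\boldsymbol\lambda \in \sigma_{\text{left}}(\boldsymbol N)$ implies $0\in\sigma(T)$; without the commutative $C^*$-algebra structure this step is not available. Everything else is standard spectral theory for a single positive operator.
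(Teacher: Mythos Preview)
Your proof is correct and follows the same overall structure as the paper's: the first inclusion is handled formally by contraposition, and for the second inclusion both arguments form the positive operator $T=\sum_j (N_j-\lambda_j)^*(N_j-\lambda_j)$, show $0\in\sigma(T)$, and then extract the approximating sequence from $\langle Tx_n,x_n\rangle=\sum_j\|(N_j-\lambda_j)x_n\|^2\to 0$.

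The one genuine difference is how $0\in\sigma(T)$ is obtained. You invoke the Gelfand identification $\sigma_{\text{left}}(\boldsymbol N)=\{(\ell(N_1),\dots,\ell(N_d)):\ell\in\mathcal M\}$ and read off that the Gelfand transform of $T$ vanishes at $\ell_0$. The paper instead works directly with the left ideal $\mathcal I=\{\sum_j S_j(N_j-\lambda_j):S_j\in\mathcal L(\mathcal H)\}$ (using the spectral permanence of Remark~\ref{spectralperm} to pass from $C^*(\boldsymbol N)$ to $\mathcal L(\mathcal H)$): since $T\in\mathcal I$ with $S_j=(N_j-\lambda_j)^*$ and $\mathcal I$ is proper, $T$ cannot be invertible. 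This avoids the Gelfand picture entirely for that step; your route is equally valid but uses slightly more machinery at precisely the point you flagged as the ``main obstacle''.
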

\begin{proof} Let $\mathcal{I} =\left\{\sum_{j=1}^d S_j\left(N_j-\lambda_j\right): S_j \in \mathcal{L}(\mathcal{H})\right\}$. 

To prove that $\sigma_a(\boldsymbol{N}) \subseteq \sigma(\boldsymbol{N})$, consider $\left(\lambda_1, \cdots, \lambda_d\right) \in \sigma_a(\boldsymbol{N})$, i.e., $\left(N_j-\lambda_j\right) x_n \rightarrow 0$, for every $x\in \mathcal H$ with $\left\|x_n\right\|=1$. If $\mathcal{I}$ is not proper, then $I \in \mathcal{I}$.
Hence, there exists  $S_j \in \mathcal{L}(\mathcal{H})$, $j=1,2, \cdots, d$, such that $\sum_{j=1}^d S_j\left(N_j-\lambda_j\right)=I$, i.e., $\sum_{j=1}^d S_j\left(N_j-\lambda_j\right) x_n=x_n$
This contradicts the fact that $\left(N_j-\lambda_j\right) x_n \rightarrow 0$ since $\left\|x_n\right\|=1$. Hence $\mathcal{I}$ is proper and $\left(\lambda_1,  \cdots, \lambda_d\right) \in \sigma(\boldsymbol{N})$.

To prove that $\sigma(\boldsymbol{N}) \subseteq \sigma_a(\boldsymbol{N})$, consider $\left(\lambda_1, , \cdots, \lambda_k\right) \in \sigma_l(\boldsymbol{N})$. Then $\mathcal I$ is proper and, therefore, consists of non-invertible elements.
Hence $\sum_{j=1}^d \left(N_j-\lambda_j\right)^*\left(N_j-\lambda_j\right) \in \mathcal{I}$ is non-invertible, that is, $0$ belongs to the spectrum of the operator $\sum_{j=1}^k\left(N_j-\lambda_j\right)^*\left(N_j-\lambda_j\right)$. 
Since the operator is self-adjoint, all its spectral values are approximate eigenvalues. Hence, there exists $\left\{x_n \in \mathcal{H}:\left\|x_n\right\|=1, n\in \mathbb N\right\}$ such that $\sum_{j=1}^k\left(N_j-\lambda_j\right)^*\left(N_j-\lambda_j\right) x_n \rightarrow 0$, $n \to \infty$.
It follows that $\sum_{j=1}^k\left\|\left(N_j-\lambda_j\right) x_n\right\|^2 \rightarrow 0$ and $\left(N_j-\lambda_j\right) x_n \rightarrow 0$, $n \to \infty$, for every $j$. Hence $\left(\lambda_1,  \cdots, \lambda_d\right)$ is in $\sigma_a(\boldsymbol{N})$.
\end{proof}
Recall that the spectral measure $P$ defines a commuting $d$- tuple of normal operators as described below. This is the spectral theorem for a $d$- tuple of commuting operators.
\begin{thm}[Theorem 3, \cite{Hastings}]
    Let $\boldsymbol{N}=\left(N_1, \ldots, N_d\right)$ be a $d$- tuple of pairwise commuting normal operators on acting on a Hilbert space $\mathcal H$ and let $W^*(\boldsymbol{N}):=\left\{N_1, \ldots, N_d\right\}^{\prime \prime}$ be the commuting von Neumann algebra consisting of all those operators that doubly commute with $\left\{N_1, \ldots, N_d\right\}$. Then there exists a projection-valued spectral measure $P$ with  $\operatorname{supp}(P)=\sigma(\boldsymbol{N})$ such that
\[
N_i=\int z_i d P(z), \quad 1 \leq i \leq d,
\]
where $\sigma(\boldsymbol{N})$ is computed relative to $W^*(\boldsymbol{N})$
\end{thm}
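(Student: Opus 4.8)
The plan is to deduce this multivariable spectral theorem from its scalar counterpart, Theorem~\ref{thm:1.6}, by invoking Gelfand theory to replace the abstract maximal ideal space of $C^*(\boldsymbol{N})$ by the concrete joint spectrum $\sigma(\boldsymbol{N})\subset\mathbb{C}^d$. First one records that $\mathcal{A}:=C^*(\boldsymbol{N})$, the unital $C^*$-algebra generated by $N_1,\dots,N_d$, is commutative: the $N_i$ commute with one another by hypothesis, and since each $N_j$ is normal, the Fuglede--Putnam theorem gives $N_j^*N_i=N_iN_j^*$, so $\mathcal{A}$ is generated by a commuting family. Let $\mathcal{M}$ be its maximal ideal space, a compact Hausdorff space, and $\Gamma\colon\mathcal{A}\to C(\mathcal{M})$ the Gelfand isomorphism, which is isometric and $*$-preserving.

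The heart of the argument is the identification of $\mathcal{M}$ with $\sigma(\boldsymbol{N})$. Define the continuous map $\Phi\colon\mathcal{M}\to\mathbb{C}^d$ by $\Phi(\ell)=(\ell(N_1),\dots,\ell(N_d))$. By the identity $\sigma(\boldsymbol{N})=\{(\ell(N_1),\dots,\ell(N_d)):\ell\in\mathcal{M}\}$ recorded above, the image of $\Phi$ is precisely $\sigma(\boldsymbol{N})$; and by Remark~\ref{spectralperm} this is the same set whether the Harte spectrum is computed in $\mathcal{A}$ or in $W^*(\boldsymbol{N})$, which disposes of the clause about the spectrum in the statement. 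Moreover $\Phi$ is injective, for each $\ell\in\mathcal{M}$ is automatically a $*$-homomorphism and is therefore determined by the scalars $\ell(N_1),\dots,\ell(N_d)$: these also fix $\ell$ on each $N_j^*$ via $\ell(N_j^*)=\overline{\ell(N_j)}$, hence on the dense $*$-subalgebra generated by $N_1,\dots,N_d$. A continuous bijection from a compact space onto a Hausdorff space is a homeomorphism, so $\Phi$ identifies $\mathcal{M}$ with $\sigma(\boldsymbol{N})$, hence $C(\mathcal{M})$ with $C(\sigma(\boldsymbol{N}))$, sending the $i$-th coordinate function $z_i$ to $\Gamma(N_i)$.

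Carrying $\Gamma^{-1}$ through this identification yields a unital $*$-representation $\rho\colon C(\sigma(\boldsymbol{N}))\to\mathcal{L}(\mathcal{H})$ with $\rho(z_i)=N_i$. Since $\sigma(\boldsymbol{N})$ is compact, Theorem~\ref{thm:1.6} produces a unique spectral measure $P$ on the Borel subsets of $\sigma(\boldsymbol{N})$, with each $P_{x,y}$ regular, such that $\rho(f)=\int f\,dP$ for all $f\in C(\sigma(\boldsymbol{N}))$; taking $f=z_i$ gives
\[
N_i=\int z_i\,dP(z),\qquad 1\leq i\leq d.
\]
For the support, suppose some nonempty relatively open $U\subseteq\sigma(\boldsymbol{N})$ satisfied $P(U)=0$; a nonzero $f\in C(\sigma(\boldsymbol{N}))$ with $\{f\neq0\}\subseteq U$ would then satisfy $\rho(f)=0$, since $f$ vanishes off the $P$-null set $U$, contradicting the injectivity of the isometry $\rho=\Gamma^{-1}$. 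Hence no nonempty open subset of $\sigma(\boldsymbol{N})$ is $P$-null, i.e.\ $\operatorname{supp}(P)=\sigma(\boldsymbol{N})$. Finally, from the construction recalled after Theorem~\ref{thm:1.6} one has $P(E)=\tilde{\rho}(\chi_E)$, where $\tilde{\rho}$ is assembled from weak integrals of operators in $\rho(C(\sigma(\boldsymbol{N})))=\mathcal{A}$; thus each $P(E)$ lies in $\mathcal{A}^{\prime\prime}=W^*(\boldsymbol{N})$, exactly as in the single-operator case.

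The only step with genuine content is the identification of $\mathcal{M}$ with $\sigma(\boldsymbol{N})\subset\mathbb{C}^d$, and there normality is indispensable --- it is what makes $\mathcal{A}$ commutative (so Gelfand theory applies) and what forces a multiplicative functional to be pinned down by its values on $N_1,\dots,N_d$ alone. All the rest is a transcription of Theorem~\ref{thm:1.6}; an alternative proof, forming the product of the pairwise-commuting one-variable spectral measures of $N_1,\dots,N_d$ on $\mathbb{C}^d$ and then cutting down to the support, reaches the same conclusion at the cost of separately verifying countable additivity of the product and re-deriving the support statement.
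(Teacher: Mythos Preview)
Your argument is correct. Note, however, that the paper does not itself prove this theorem: it is quoted as Theorem~3 of \cite{Hastings} and used as a black box. The only portion the paper revisits is the equality $\operatorname{supp}(P)=\sigma(\boldsymbol{N})$, which it isolates in the subsequent lemma and proves by a different route: it first invokes Lemma~\ref{approxspec} to reduce to showing $\operatorname{supp}(P)=\sigma_a(\boldsymbol{N})$, and then verifies both inclusions by direct norm estimates of the form $\sum_j\|(N_j-\lambda_j^0)x\|^2=\int\sum_j|\lambda_j-\lambda_j^0|^2\,d\langle P(\boldsymbol{\lambda})x,x\rangle$, following Halmos. Your proof instead derives the support statement from the injectivity of the Gelfand isomorphism $\rho=\Gamma^{-1}$: a nonempty open $P$-null set would force a nonzero continuous function to have $\rho(f)=0$. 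This is slicker and avoids the approximate-point-spectrum detour, but the paper's estimate is more quantitative and reusable (it is essentially the same computation that drives Theorem~\ref{impcomp}). Your closing remark that each $P(E)=\tilde{\rho}(\chi_E)$ lies in $\mathcal{A}''=W^*(\boldsymbol{N})$ is a nice addition; the paper leaves that implicit in the citation.
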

From Remark \ref{spectralperm}, we conclude that $\sigma(\boldsymbol{N})$ is unambiguously defined irrespective of whether we define it relative to the algebra $W^*(\boldsymbol{N})$ or the algebra $C^*(\boldsymbol{N})$. The claim that $\operatorname{supp}(P)=\sigma(\boldsymbol{N})$ follows from \cite[Theorem 2]{CobSec} as pointed out in \cite{Hastings}. We verify this equality closely following the proof of Theorem 2 from \cite[p. 64]{PRH}.

\begin{lem} Assume that $P$ is a compact spectral measure on a locally compact space $\mathbb {S}\subset \mathbb C^d$. Then the support of the spectral measure $P$ and the spectrum of the commuting tuple $\boldsymbol N$ of operators, where $N_i=\int z_i d P(z), \, 1 \leq i \leq d,$ are equal. 
\end{lem}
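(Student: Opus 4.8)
The plan is to establish the two inclusions $\operatorname{supp}(P) \subseteq \sigma(\boldsymbol N)$ and $\sigma(\boldsymbol N) \subseteq \operatorname{supp}(P)$ separately, both by elementary spectral-measure manipulations, and in view of Lemma \ref{approxspec} it suffices to work with the approximate point spectrum $\sigma_a(\boldsymbol N)$.

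\textbf{Step 1: $\operatorname{supp}(P) \subseteq \sigma_a(\boldsymbol N)$.} Let $\boldsymbol\lambda = (\lambda_1, \ldots, \lambda_d) \in \operatorname{supp}(P)$, so that for every $\varepsilon > 0$ the open ball $B(\boldsymbol\lambda, \varepsilon)$ satisfies $P(B(\boldsymbol\lambda,\varepsilon)) \neq 0$. Choosing a sequence $\varepsilon_n \downarrow 0$ and a unit vector $x_n$ in the range of $P(B(\boldsymbol\lambda, \varepsilon_n))$, one computes
\[
\big\|(N_i - \lambda_i) x_n\big\|^2 = \int_{B(\boldsymbol\lambda,\varepsilon_n)} |z_i - \lambda_i|^2 \, dP_{x_n, x_n}(z) \leq \varepsilon_n^2 \to 0,
\]
using that $x_n = P(B(\boldsymbol\lambda,\varepsilon_n)) x_n$ and that $P_{x_n,x_n}$ is supported inside that ball. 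Hence $\boldsymbol\lambda \in \sigma_a(\boldsymbol N) = \sigma(\boldsymbol N)$.

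\textbf{Step 2: $\sigma_a(\boldsymbol N) \subseteq \operatorname{supp}(P)$.} Suppose $\boldsymbol\lambda \notin \operatorname{supp}(P)$. Then there is an open neighbourhood $E$ of $\boldsymbol\lambda$ with $P(E) = 0$, and one may shrink $E$ so that $\sum_{i=1}^d |z_i - \lambda_i|^2 \geq \delta > 0$ on $\mathbb S \setminus E$ for some $\delta$ (possible since $\mathbb S \setminus E$ is closed and misses $\boldsymbol\lambda$; if $\mathbb S$ is noncompact one uses that $P$ is a compact spectral measure so $\sigma(P)$ is a compact set bounded away from $\boldsymbol\lambda$ outside $E$). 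For any unit vector $x$, since $P(E) = 0$ we have $P_{x,x}(E) = 0$, so
\[
\sum_{i=1}^d \big\|(N_i - \lambda_i) x\big\|^2 = \int_{\mathbb S \setminus E} \sum_{i=1}^d |z_i - \lambda_i|^2 \, dP_{x,x}(z) \geq \delta \|x\|^2 = \delta.
\]
This bound is uniform over unit vectors, so no sequence $x_n$ of unit vectors can make $(N_i - \lambda_i)x_n \to 0$ for all $i$; thus $\boldsymbol\lambda \notin \sigma_a(\boldsymbol N)$. Combining the two steps gives $\operatorname{supp}(P) = \sigma_a(\boldsymbol N) = \sigma(\boldsymbol N)$.

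\textbf{Main obstacle.} The only delicate point is the noncompact ambient space: one must invoke the hypothesis that $P$ is a \emph{compact} spectral measure to guarantee in Step 2 that outside a neighbourhood of $\boldsymbol\lambda$ the function $\sum_i |z_i - \lambda_i|^2$ is bounded below by a positive constant on $\operatorname{supp}(P)$, rather than merely positive; and one should be slightly careful that the unit vectors $x_n$ chosen in Step 1 genuinely lie in $\mathcal H$ (i.e.\ that the relevant ranges are nonzero), which is exactly the definition of $\operatorname{supp}(P)$. Everything else is a routine computation with the scalar measures $P_{x,x}$ and the functional calculus $N_i = \int z_i\, dP(z)$.
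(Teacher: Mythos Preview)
Your proof is correct and matches the paper's approach essentially line for line: both directions are handled by the same spectral-measure estimates $\sum_i \|(N_i-\lambda_i)x\|^2 = \int \sum_i |z_i-\lambda_i|^2\, dP_{x,x}$, after reducing to $\sigma_a(\boldsymbol N)$ via Lemma~\ref{approxspec}; the only differences are that you swap the order of the two inclusions and phrase Step~1 with an explicit sequence $\varepsilon_n\downarrow 0$ where the paper uses an arbitrary $\delta$. One minor remark: the compactness hypothesis is not actually needed for the lower bound in Step~2 (once $E$ contains a ball of radius $r$ about $\boldsymbol\lambda$, you already have $\sum_i|z_i-\lambda_i|^2\geq r^2$ on $\mathbb S\setminus E$), so your ``main obstacle'' is a non-issue---the paper uses compactness only implicitly to make the $N_i$ bounded.
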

\begin{proof} From Lemma \ref{approxspec}, we have that $\sigma(\boldsymbol{N})=\sigma_a(\boldsymbol{N})$. Thus, it is enough to prove  $\operatorname{supp}(P)=\sigma_a(\boldsymbol{N})$.  

Pick $\boldsymbol{\lambda}^0\not \in \operatorname{supp}(P)$. Then there exists an open neighbourhood $U$ of $\boldsymbol{\lambda}^0$ such that $P(U) = 0$. Let $U^\prime$  be the compliment of $U$ and $\delta=\operatorname{dist}(\boldsymbol{\lambda}^0, U^\prime)$. Now, we have 
\begin{align*}
    \sum_{j=1}^d \big \|(N_j - \lambda_j^0 I)x \big \|^2 & =  \sum_{j=1}^d \langle (N_j - \lambda_j^0)^* ( N_j - \lambda_j^0 ) x, x\rangle \\
    &= \sum_{j=1}^d \int \big (\overline{(\lambda_j - \lambda_j^0)}(\lambda_j - \lambda_j^0) \big ) d \langle P(\boldsymbol{\lambda}) x , x\rangle 
\end{align*}
for all $x\in \mathcal H$. Since $P(U) = 0$, it follows that 
\begin{align*}
\big (  \sum_{j=1}^d \big \|(N_j - \lambda_j^0 I)x \big \| \big )^2 & \geqslant  \sum_{j=1}^d \big \|(N_j - \lambda_j^0 I)x \big \|^2 \\
& = \int_{U^\prime} \sum_{j=1}^d |\lambda_j - \lambda_j^0|^2 \langle P(\boldsymbol{\lambda}) x , x\rangle\\
&\geqslant  \delta^2 \|x\|^2 
\end{align*}
for all $x\in \mathcal H$. Consequently, $\displaystyle\sum_{j=1}^d\left\|\left(N_j-\lambda_j^0\right) x\right\| \geqslant \delta \|x\|$ and 
therefore $\boldsymbol{\lambda}^0\not \in \sigma_a(\boldsymbol{N})$.

Conversely, if $\boldsymbol{\lambda}^0\in \operatorname{supp}(P)$, then $P(U) \ne 0$ for every open set containing $\boldsymbol{\lambda}^0$. Hence if $\delta$ is any positive number such that $B=\{\boldsymbol{\lambda}: |\boldsymbol{\lambda} - \boldsymbol{\lambda}^0| < \delta \}$, then there is a non-zero vector $x$ in the range of $P(B)$. Now, arguing as in the first half of the proof, we conclude that 
\[\big (  \sum_{j=1}^d \big \|(N_j - \lambda_j^0 I)x \big \| \big )^2 =\int_{B} \sum_{j=1}^d |\lambda_j - \lambda_j^0|^2 \langle P(\boldsymbol{\lambda}) x , x\rangle \leqslant \delta^2 \|x\|^2. \]
It follows that $\boldsymbol{\lambda}^0\in \sigma_a(\boldsymbol{N})$. 
\end{proof}
Adapting the terminology from \cite{PRH}, let us say that a spectral measure defined on a subset $\mathbb{S}$ of $\mathbb C^d$ is a {\em complex} spectral measure.  
It is proved in \cite[Theorem 1, p. 63]{PRH} that a complex spectral measure is regular for $d=1$. The same proof works with the obvious modifications for any $d\in \mathbb N$. We record below this property of a complex spectral measure without proof.  
\begin{prop}
    Every complex spectral measure is regular. 
\end{prop}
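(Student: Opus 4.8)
The plan is to run the $d=1$ argument of \cite[Theorem 1, p.~63]{PRH} with the complex plane replaced by $\mathbb C^d\cong\mathbb R^{2d}$. The only features of $\mathbb C$ that enter that proof are that it (and any locally compact $\mathbb S\subseteq\mathbb C^d$) is metrizable, $\sigma$-compact and locally compact, and these all persist for every $d$; so the higher-dimensional statement is proved by exactly the same steps once one substitutes the Euclidean structure of $\mathbb R^{2d}$ for that of $\mathbb R^2$.

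First I would reduce the projection-valued assertion to a scalar one. For $x\in\mathcal H$ the set function $P_{x,x}(E)=\langle P(E)x,x\rangle=\|P(E)x\|^2$ is a \emph{finite positive} Borel measure on $\mathbb S$, and the polarization identity
\[
P_{x,y}=\tfrac14\sum_{k=0}^{3} i^{k}\,P_{x+i^{k}y,\;x+i^{k}y}
\]
exhibits each $P_{x,y}$ as a finite linear combination of such measures (alternatively, $|P_{x,y}|\le\tfrac12(P_{x,x}+P_{y,y})$ by Cauchy--Schwarz, so $P_{x,y}$ is absolutely continuous with respect to a regular finite positive measure, and a finite measure absolutely continuous with respect to a regular one is itself regular). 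Hence it suffices to prove that every finite positive Borel measure $\mu$ on $\mathbb S$ is regular; and since the compact (resp.\ relatively open) subsets of a locally compact $\mathbb S\subseteq\mathbb C^d$ are precisely the compact (resp.\ open) subsets of $\mathbb C^d$ lying in $\mathbb S$, one may harmlessly take $\mathbb S=\mathbb C^d$.

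The heart of the matter is then the classical $\sigma$-algebra argument. Let $\mathcal R$ be the collection of Borel sets $E\subseteq\mathbb C^d$ that are simultaneously outer regular, $\mu(E)=\inf\{\mu(U):U\supseteq E\text{ open}\}$, and inner regular, $\mu(E)=\sup\{\mu(K):K\subseteq E\text{ compact}\}$. One checks in turn: (i) every closed $F$ lies in $\mathcal R$ --- outer regularity from $F=\bigcap_n\{z:\operatorname{dist}(z,F)<1/n\}$ and continuity of $\mu$ from above (metrizability is used here), inner regularity from $F\cap\overline{B(0,n)}\uparrow F$ with each $F\cap\overline{B(0,n)}$ compact and $\mu$ continuous from below ($\sigma$-compactness); (ii) $\mathcal R$ is stable under complementation --- an open outer approximant of $E$ complements to a closed inner approximant of $E^{c}$, which by (i) is in turn exhausted by compacta, and symmetrically a compact inner approximant of $E$ complements to an open outer approximant of $E^{c}$; (iii) $\mathcal R$ is stable under countable unions --- the usual $\varepsilon/2^{n}$ estimate handles the open outer approximants, while for the compact inner approximants one first discards a $\mu$-small tail (using $\mu(\mathbb C^d)<\infty$) and then approximates the remaining finite subunion. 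Thus $\mathcal R$ is a $\sigma$-algebra containing all closed sets, hence equals the Borel $\sigma$-algebra, so $\mu$ is regular. Unwinding the reduction gives regularity of each $P_{x,y}$; moreover $\|P(E\setminus K)x\|^{2}=P_{x,x}(E\setminus K)\to0$ over suitable compacta $K$, which is the strong-operator convergence $P(K)\to P(E)$, i.e.\ regularity of $P$ itself.

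I expect the only genuine friction to be the bookkeeping in (ii) and (iii): matching an \emph{inner} approximant on one side of a complement with an \emph{outer} approximant on the other forces the short detour through (i) (exhausting a closed set by compacta), and throughout one must keep $\mu$ finite so that continuity from above and the tail estimate in the countable-union step are legitimate. None of this is deep --- it is Halmos's proof verbatim --- so the ``obvious modifications'' amount to nothing more than carrying $\mathbb R^{2d}$ through these steps in place of $\mathbb R^{2}$.
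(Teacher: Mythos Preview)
Your proposal is correct and is precisely what the paper has in mind: the paper does not give its own proof but simply records the statement, citing \cite[Theorem 1, p.~63]{PRH} and remarking that ``the same proof works with the obvious modifications for any $d\in\mathbb N$.'' You have carried out exactly those modifications --- reducing to the scalar measures $P_{x,x}$ by polarization and then running Halmos's $\sigma$-algebra argument with $\mathbb R^{2d}$ in place of $\mathbb R^2$ --- so there is nothing to compare.
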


\section{Multiplier representations and Induced representations}\label{section:Indrepn}

This section is devoted to establishing a correspondence between multiplier representations and induced representations. We start with some generalities.

Let $G$ be a locally compact second countable group and $\mathbb{S}$ be a locally compact $G$- space, that is, there is a map $\alpha:G\times \mathbb{S} \to \mathbb{S}$, such that for a fixed $g\in G$,  $s\to \alpha_g $, $\alpha_g(s):= \alpha(g,s)$, is a bijective and continuous map of $\mathbb{S}$, moreover, $g\to \alpha_g$ is a homomorphism. We let $g\cdot s:=\alpha(g,s)$, $g\in G$ and $s\in \mathbb{S}$.  The action of $G$ on $\mathbb{S}$ is said to be \textit{transitive} if for every pair  $s_1,s_2$ in $\mathbb{S}$, there is a $g\in G$ such that $g \cdot s_1 = s_2$. Let $H\subseteq G$ be a closed subgroup and let $\mathbb{S}:=G / H$ be the space of cosets: $\{g H\mid g \in G\}$. Equipped with the action of $G$ by left multiplication: $g^\prime (g H):= (g^\prime g) H$, $g^\prime, g\in G$, the coset space $\mathbb{S}$ is a transitive $G$-space. On the other hand, any transitive $G$-space must be of this form, see \cite[Proposition 2.46]{GF}. Following \cite{GF}, let us say that $\mathbb{S}$ is a \textit{homogeneous} space if it is homeomorphic to $G / H$. In this case, we identify $\mathbb{S}$ with $G/H$ and define $(g\cdot f)(s) = f (g^{-1} \cdot s)$ for any function defined on $\mathbb{S}$. 

Let $(\mathbb{S}, \mathcal B)$ be the Borel measurable space, and note that each $g\in G$ defines a continuous map on $\mathbb{S}$ by our assumption.  Given a $\sigma$-finite measure $\mu$ on $\mathbb{S}$, define the {\it push-forward}  $g_*\mu$  of the measure $\mu$ by the requirement
\[(g_*\mu )  (A) := \mu \big (g\cdot A \big ),\,\, g\cdot A:= \{g^{-1}\cdot s \mid s\in A \}, A\in \mathcal B.\]
The measure $\mu$ on $G$ is said to be invariant if $g_*\mu = \mu$ and \textit{quasi-invariant} if 
$g_*\mu$ is equivalent (mutually absolutely continuous) to $\mu$ for all $g\in G$.  
\begin{rem}\label{cross-quasi}
If $G$ is second countable, then we have the following two very useful tools at our disposal. 
\begin{enumerate}
\item There is a Borel cross-section $p:G/H \to G$, that is, a Borel subset $B \subset G$ that meets each coset of $H$ in exactly one point, see \cite[Corollary, Theorem 3.41]{WBA}, and \cite[Lemma 1.1]{MA52}. Thus, each $g \in G$ can be written uniquely as $g =g_1 g_0$ with $g_0 \in H$ and $g_1 \in B$. 
\item There is a quasi-invariant measure uniquely determined modulo mutual absolute equivalence on $\mathbb S$, see \cite[Theorem 2.58]{GF}.
\end{enumerate}
\end{rem}

The inducing construction is a way of producing unitary representations of a locally compact group $G$ from unitary representations of a closed subgroup $H$ of $G.$ In this section we describe two pictures of {\textit{induced representations}}.

\begin{enumerate}
\item[(i)] The first is that of a multiplier representation, which is more suitable for function theory/operator theory applications.

\item[(ii)] Second one is the standard representation theoretic construction of a $G-$action on a space of vector valued functions on $G$ that transform according to the given representation of $H.$

\end{enumerate}

\subsection{Multiplier representations}

Let $\mathbb S$ be a second countable Hausdorff space on which a Lie group $G$ acts transitively. For $x \in \mathbb S$ and $g \in G,$ $g \cdot x$ denotes the action of $g$ on the point $x.$ If $x_0 \in \mathbb S,$ the stabilizer of $x_0$ is the closed subgroup $H$ of $G$ defined by $$H = \{h \in G:~h \cdot x_0 = x_0\}.$$ 
We fix $x_0\in \mathbb{S}$ in what follows. 
Since the action is transitive, we can identify the space $\mathbb S$ with the homogeneous space $G/H.$ Recall that $G/H$ admits a quasi-invariant measure, which we denote by $\mu.$


\begin{ex}
Let $\mathbb B^n = \{z \in \mathbb C^n:~|z| < 1 \}$ denote the Euclidean unit ball in $\mathbb C^n.$ The group of bi-holomorphic automorphisms on $\mathbb B^n,$ denoted by {\rm M\"ob}$(\mathbb B^n)$ can be described as follows:

For each $a \in \mathbb B^,$ let $P_a$ be the orthogonal projection of $\mathbb C^n$ onto the subspace generated by $a.$ So, $P_0 = 0$ and $$P_a z =
\frac{\langle z, a \rangle}{\langle a, a \rangle} a, \quad a \neq 0.$$ Let $Q_a = I-P_a$ be the orthogonal projection onto the orthogonal complement of $a.$
Define
\begin{equation}\label{mob-unit-ball}
\varphi_a(z) = \frac{a-P_a z - (1-|a|^2)^{1/2} Q_a z}{1 - \langle z, a \rangle}.
\end{equation}
The bi-holomorphic map $\varphi_a$ is an involution which interchanges the points $0$ and $a.$ Then, the group M\"ob$(\mathbb B^n)$ is given by 
\[ {\text{\rm M\"ob}}(\mathbb B^n) = \{U\varphi_a: a \in \mathbb B^n , U \in U(n)\}.\] 

The unit ball $\mathbb B^n$ can be identified with $G/K$ where $G = \text{M\"ob} (\mathbb B^n)$ and $K = SU(n).$ In this case, it is well known that $G/K$ carries a measure that is $G$-invariant.
\end{ex}

Now we describe the multiplier representations. Let $m : G \times \mathbb S \to \mathcal{U}(V)$ be a Borel function, where $\mathcal{U}(V)$ is the space of unitary operators on a complex separable Hilbert space $V.$ 

Define $$T_g f(x) = \left(\frac{d (g_*\mu)}{d\mu}(x)\right)^{\frac{1}{2}} m(g, x) f(g^{-1} \cdot x),$$ where $f$ comes from $L^2(\mathbb S, \mu, V).$ We assume that $T_g$ defines a unitary representation of $G.$ It is easily verified that $g \to T_g$ is a homomorphism if and only if the {{\it multiplier}} $m$ satisfies the cocycle identity

\begin{equation}\label{eqn:cocycle1}
m(g_1g_2, x) = m(g_1, x) m(g_2, g_1^{-1} \cdot x), \quad g_1, g_2 \in G, x \in \mathbb S.
\end{equation}

Next, set $\sigma(g) = m(g, x_0).$ Notice that,
\begin{equation}\label{eqn:sigma}
\sigma(hg) = \sigma(h) \sigma(g), \quad h \in H, g \in G.
\end{equation}
In particular, $\sigma$ restricted to $H$ is a homomorphism of $H$ into $\mathcal{U}(V),$ the group of unitary operators on $V$ and hence a unitary representation of $H$ as $m$ is Borel.

Next, we obtain a general form for the multiplier $m$ using $\sigma$ (this connects the representation of $H$ and the multiplier $m$). We start with the definition of a section.

If  $s : \mathbb S \to G$ is a Borel cross-section as in Remark \ref{cross-quasi}, then  every $g \in G$ admits a unique decomposition $g = s(x)h$ where $x = g \cdot x_0 $ and $h \in H.$ We shall need the following (Remark \ref{cross-quasi}):
\begin{lem}
There exists a Borel cross-section $s : \mathbb S \to G$ which is measurable.
\end{lem}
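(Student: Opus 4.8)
The statement to prove asserts the existence of a Borel cross-section $s : \mathbb{S} \to G$ (with $\mathbb{S} = G/H$) that is measurable. The plan is to deduce this directly from the two tools already collected in Remark \ref{cross-quasi}, together with the standing hypothesis that $G$ is a locally compact second countable group and $H$ a closed subgroup. Indeed, part (1) of Remark \ref{cross-quasi} — quoting \cite[Corollary, Theorem 3.41]{WBA} and \cite[Lemma 1.1]{MA52} — already furnishes a Borel subset $B \subseteq G$ meeting each coset $gH$ in exactly one point. The content of the lemma is simply to repackage this as a map.

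The key steps, in order, would be as follows. First, recall the canonical quotient map $q : G \to G/H$, $q(g) = gH$, which is continuous, open, and surjective. Second, invoke Remark \ref{cross-quasi}(1) to obtain the Borel transversal $B \subseteq G$; by construction the restriction $q|_B : B \to G/H$ is a bijection. Third, define $s := (q|_B)^{-1} : \mathbb{S} \to G$, so that $q \circ s = \mathrm{id}_{\mathbb{S}}$ and $s(x) \in B$ for every $x$; this is the desired cross-section, and it is a section in the sense used later, i.e. every $g \in G$ decomposes uniquely as $g = s(g \cdot x_0)\,h$ with $h \in H$, since $B$ meets $gH$ in the single point $s(gH)$. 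Fourth, verify Borel measurability of $s$: for a Borel set $E \subseteq G$ one has $s^{-1}(E) = s^{-1}(E \cap B) = q(E \cap B)$, and since $q$ is continuous, open, and $G$ is second countable, $q$ maps Borel sets of $G$ to Borel sets of $G/H$ (a standard fact about open continuous maps between second countable spaces, or alternatively one uses that $q|_B$ is a Borel isomorphism onto its image by Lusin–Souslin since both are standard Borel spaces). Hence $s^{-1}(E)$ is Borel, so $s$ is Borel measurable.

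The only genuine subtlety — and the step I would expect to require the most care — is the last one: arguing that $s = (q|_B)^{-1}$ is actually Borel, rather than merely a set-theoretic inverse. Two clean routes are available. The measure-theoretic route: $G$ and $G/H$ are second countable locally compact Hausdorff, hence Polish, so they are standard Borel spaces; $B$ with its subspace Borel structure is a Borel subset of a Polish space, hence standard Borel; $q|_B : B \to G/H$ is a continuous bijection between standard Borel spaces, so by the Lusin–Souslin theorem it is a Borel isomorphism, and therefore its inverse $s$ is Borel. The topological route: $q$ is an open map, so it carries open sets to open sets; one checks it carries the generating algebra (finite intersections, countable unions) appropriately and uses that in a second countable space every Borel set is built from a countable operation on opens — but since $q$ need not carry \emph{all} Borel sets to Borel sets in general without openness plus some countability, the cleanest is to simply cite that $q$ open continuous with $G$ second countable implies $q$ sends Borel sets to Borel sets (see e.g. \cite{GF}). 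Either way, the proof is short; I would present the Lusin–Souslin argument as it is self-contained given that $G$ is second countable.

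For completeness I would also remark that this $s$ is the cross-section referenced in the sentence preceding the lemma, so that the decomposition $g = s(x) h$ with $x = g \cdot x_0$, $h \in H$ used in the subsequent development of the multiplier $m$ is exactly the one produced here; no additional properties of $s$ beyond being a Borel right inverse of $q$ are needed downstream, which is why the bare existence statement suffices.
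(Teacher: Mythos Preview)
Your argument is correct. The paper itself does not supply a proof of this lemma at all: it merely states the lemma with a parenthetical pointer back to Remark~\ref{cross-quasi}, which in turn cites \cite[Corollary, Theorem~3.41]{WBA} and \cite[Lemma~1.1]{MA52}. So you have filled in details that the paper deliberately outsources to the literature.

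The content you supply is the standard one and matches what those references contain: take the Borel transversal $B$, set $s=(q|_B)^{-1}$, and appeal to the Lusin--Souslin theorem (injective Borel maps between standard Borel spaces are Borel isomorphisms onto their images) to conclude $s$ is Borel. Your remark that the alternative ``topological route'' via openness of $q$ is shakier is also apt; the Lusin--Souslin argument is the right one to present. Nothing further is needed.
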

Notice that, for $x \in \mathbb S,$ $g \in G,$
\begin{equation}\label{eqn:form}
\sigma(s(x)^{-1}g) = m(s(x)^{-1}g, x_0) = m(s(x)^{-1}, x_0) m(g, x) = \sigma(s(x)^{-1}) m(g, x).
\end{equation}

Since, $$s(g^{-1} \cdot x)^{-1} g^{-1} s(x) \cdot x_0 = s(g^{-1} \cdot x)^{-1} g^{-1} \cdot x = x_0,$$ we have, $$s(g^{-1} \cdot x)^{-1} g^{-1} s(x) \in H.$$  So, $g^{-1} s(x) = s(g^{-1} \cdot x) h$ for some $h \in H.$ Write, $ s(x)^{-1} g = h^{-1} s(g^{-1} \cdot x)^{-1}.$ By \eqref{eqn:sigma} we get  $$ \sigma(s(x)^{-1} g) = \sigma(h^{-1}) \sigma(s(g^{-1} \cdot x)^{-1}), \quad g^{-1} s(x) = s(g^{-1} \cdot x) h.$$ Using the above and \eqref{eqn:form}, we get the general form for a multiplier as
\begin{equation}\label{eqn:general-form-cocyle}
m(g, x) = \sigma (s(x)^{-1})^{-1} \sigma(h^{-1}) \sigma(s(g^{-1} \cdot x)^{-1}), \quad g^{-1}s(x) = s(g^{-1} \cdot x) h
\end{equation}

\subsection{Induced representation}
Next, we describe the representation theoretic construction of an induced representation and show that multiplier representations and induced representations are the same. As above $\mathbb S = G/H$ and let $\mu$ be the $G-$invariant measure on $\mathbb S.$

Let $\sigma$ be a unitary representation of $H$ on a Hilbert space $\mathcal{H}_\sigma.$ We denote the inner product on $\mathcal{H}_\sigma$ by $\langle \cdot ,~ \cdot \rangle_\sigma.$ Let $\mathcal{F}_c$ be the vector space of functions taking values in $\mathcal{H}_\sigma$ defined by 
\[ \mathcal{F}_c = \{ F \in C(G, \mathcal{H}_\sigma):~F(gh) = \sigma(h^{-1})F(g)~{\rm{and}}~q({\rm{supp}}(F))~{\rm{is~compact}}~\},\] 
where $q :G \to G/H$ is the canonical projection. Notice that, if $F_1, F_2 \in \mathcal{F}_c,$ the function 
$\langle F_1(g), F_2(g) \rangle_{\sigma}$ depends only on the coset $q(h) = gH$ (as the representation $\sigma$ is unitary) and so is a function in $C_c(G/H).$


Define an inner product on $\mathcal{F}_c$ by 
\[ \langle F_1, F_2 \rangle = \int_{\mathbb{S}}~\left \langle F_1(x), F_2(x)\right \rangle_\sigma~d\mu(x).\] 

It is easy to verify that the above defines an inner product on $\mathcal{F}_c.$ Let $\mathcal{H}$ be the Hilbert space obtained by completing $\mathcal{F}_c$ with respect to this inner product. Notice that the space $\mathcal{F}_c$ is invariant under the operators $L_g, g \in G$, defined by $$L_g F(g_1) = \left (\frac{d(g_*\mu)}{d\mu}(g_1 \cdot x_0) \right)^{\frac{1}{2}} F(g^{-1} g_1)$$ and the inner product on $\mathcal{F}_c$ is preserved by these maps $L_g, g \in G$ (because $\mu$ is quasi $G$-invariant). It follows that $L_g$ for each $g \in G$, extends to an isometry on $\mathcal{H}.$ Since it is easy to see that $L_{g_1 g_2} = L_{g_1} L_{g_2}$ on $\mathcal{F}_c,$ each $L_g$ becomes a unitary operator on $\mathcal{H}.$ Moreover, the map $g \to L_g F$ from $G$ to $\mathcal{F}_c$ (for a fixed $F$) is continuous and since the operators $L_g$ are uniformly bounded, they are strongly continuous for all $F \in \mathcal{H}.$ Thus we have a unitary representation of $G$ called the induced representation, denoted by ${\rm{Ind}}_H^G(\sigma).$ We may also use $L_g$ to denote this representation when there is no scope for confusion.
\begin{ex}
Let $\sigma$ be the trivial representation of $H$ on $\mathbb C.$ Then, $\mathcal{F}_c$ is naturally identified with $C_c(G/H) = C_c(\mathbb S) $ and $\mathcal{F} = L^2(G/H) = L^2(\mathbb S, d\mu).$ If $G/H$ admits a $G$-invariant measure $\mu$, the representation ${\rm{Ind}}_H^G(\sigma)$ is the left regular representation of $G$ on $L^2(G/H, d\mu).$
\end{ex}

\begin{rem}
The elements in the space $\mathcal{H}$ are in fact functions that take values in $\mathcal{H}_\sigma$, see {{\it Remark 1}} in \cite[Chapter 6]{GF}.
\end{rem}
For $F \in \mathcal{H},$ define the $\mathcal{H}_\sigma$ valued function $\widetilde{F}$ on $\mathbb S$ by $\widetilde{F}(x) = F(s(x)),$ where $s: \mathbb S \to G$ is the measurable section chosen earlier. Then, it can be verified that the map $F \to \widetilde{F}$ is a unitary transformation from $\mathcal{H}$ onto the Hilbert space $\widetilde{\mathcal{H}}$ where $$ \widetilde{\mathcal{H}} = L^2(\mathbb S, \mathcal{H}_\sigma, d\mu) = \{ \widetilde{F} : \mathbb S \to \mathcal{H}_\sigma, \quad \int_\mathbb S \| \widetilde{F}(x)\|_\sigma^2 d\mu(x) < \infty \}.$$ Transferring the action of $G$ on $\mathcal{H}$ to $\widetilde{\mathcal{H}}$ we obtain the representation $\widetilde{L_g}$ of $G$ defined by
\begin{align}\label{eqn:lgtilde}
\widetilde{L_g}\widetilde{F}(x) &= \left (\frac{d(g_* \mu)}{d\mu}(x) \right)^{\frac{1}{2}}\sigma(h^{-1}) \widetilde{F}(g^{-1}s(x) \cdot x_0)\nonumber \\
&= \left (\frac{d(g_* \mu)}{d\mu}(x) \right)^{\frac{1}{2}} \sigma(h^{-1}) \widetilde{F}(g^{-1} \cdot x),
\end{align}
where $h \in H$ is determined by $g^{-1}s(x) = s(g^{-1}\cdot x)h.$ The representation $\widetilde{L_g}$ is, of course, unitarily equivalent to
${\rm{Ind}}_H^G(\sigma)$.
Recall that the representation ${\rm{Ind}}_H^G(\sigma)$ is  realized (as ${L}_g$) on the Hilbert space ${\mathcal{H}}.$

We now show the correspondence between multiplier representations and induced representations. For this consider the multiplier representation $T_g$ described earlier with the cocycle $m$ and its restriction $\sigma$. Note that $\sigma$ is defined on all of $G$ and when restricted to $H$ gives a unitray representation of $H$, see Equation \eqref{eqn:sigma}. 

Define the map $\mathcal{A}$ on $\widetilde{\mathcal{H}}$ by $\mathcal{A}f(x) = \sigma(s(x))^{-1}f(x).$ Clearly, $\mathcal{A}$ is an invertible transformation. Let us compute $\mathcal{A}T_g\mathcal{A}^{-1}.$ For $f \in \widetilde{\mathcal{H}}$ we have,
\begin{eqnarray*}
  \mathcal{A}T_g\mathcal{A}^{-1}f(x) &=& \sigma(s(x))^{-1} \left ( T_g \mathcal{A}^{-1} f \right )(x) \\
   &=& \sigma(s(x))^{-1} \left (\frac{d(g_* \mu)}{d\mu}(x) \right)^{\frac{1}{2}} m(g, x) (\mathcal{A}^{-1}f)(g^{-1} \cdot x) \\
   &=& \sigma(s(x))^{-1} \left (\frac{d(g_* \mu)}{d\mu}(x) \right)^{\frac{1}{2}} m(g, x) \sigma(s(g^{-1} \cdot x))f(g^{-1} \cdot x) \\
   &=& \left (\frac{d(g_* \mu)}{d\mu}(x) \right)^{\frac{1}{2}} \sigma(h^{-1}) f(g^{-1} \cdot x),
\end{eqnarray*} where we used \eqref{eqn:general-form-cocyle} in the last step. But the above equals $\widetilde{L_g}f(x)$ (see \eqref{eqn:lgtilde}) and hence $T_g$ is equivalent to ${\rm{Ind}}_H^G(\sigma)$ with the representation $\sigma$ from which the induction is done being the representation obtained by restricting the cocycle $m$ as mentioned above. 

Conversely, let $\sigma$ be a unitary representation of $H$ on a Hilbert space $\mathcal{H}_\sigma.$ We shall construct a multiplier $m$ so that the corresponding multiplier representation, denoted by $T_g^m$ is unitarily equivalent to ${\rm{Ind}}_H^G(\sigma).$ Notice that we can not define the multiplier $m$ using the form given in \eqref{eqn:general-form-cocyle} as $\sigma(s(x))$ does not make sense if $s(x) \notin H$ (which is the case if $x \neq x_0$). However, notice that $s(x)^{-1} g s(g^{-1} \cdot x) \in H,$ as $$s(x)^{-1} g s(g^{-1} \cdot x) \cdot x_0 = s(x)^{-1} g (g^{-1} \cdot x) =
s(x)^{-1} \cdot x = x_0.$$ Hence we can define  
\[m(g, x) = \sigma (s(x)^{-1} g s(g^{-1} \cdot x)).\]
It is easy to verify that $m$ satisfies the cocycle identity \eqref{eqn:cocycle1}. Define $T_g^m$ on $\widetilde{\mathcal{H}}$ by 
\[T_g^mf(x) = \left (\frac{d(g_* \mu)}{d\mu}(x) \right)^{\frac{1}{2}} m(g, x) f(g^{-1} \cdot x).\] 
Clearly, $g \to T_g^m$ is a unitary representation of $G.$ Now, following as above by defining $\widetilde{\sigma}(g) = m(g, x_0)$ we obtain a unitary representation $\widetilde{\sigma}$ of the subgroup $H$ and that $T_g^m$ is unitarly equivalent to ${\rm{Ind}}_H^G(\widetilde{\sigma}).$ Hence, it suffices to prove that $\sigma$ and $\widetilde{\sigma}$ are unitarily equivalent as {\textit{representations of}} $H.$ Now, for $h \in H,$ 
\[ \widetilde{\sigma}(h) = m(h, x_0) = \sigma(s(x_0)^{-1} h s(h^{-1} \cdot x_0)) = \sigma(s(x_0))^{-1} \sigma(h) \sigma(s(x_0))\] 
as $s(x_0) \in H$ and $\sigma$ is a representation of $H,$ which proves that $\sigma$ and $\widetilde{\sigma}$ are equivalent. So, it follows that $T_g^m$ and ${\rm{Ind}}_H^G(\sigma)$ are unitarily equivalent. Thus we have proved the following:
\begin{thm} \label{indmultrepn}
Let $G$ be a second countable locally compact group and $H$ be a closed subgroup of $G.$
\begin{enumerate}
\thmlist
\item Let $m$ be a multiplier defined on $G \times \mathbb S$ taking values in the group of unitary operators on a Hilbert space. Then the multiplier representation $T_g^m$ defined by 
$$T_g^m f(x) = \left (\frac{d(g_* \mu)}{d\mu}(x) \right)^{\frac{1}{2}} m(g, x) f(g^{-1} \cdot x),$$
is unitarily equivalent to ${\rm{Ind}}_H^G(\sigma)$ where the representation $\sigma$ of $H$ is given by $\sigma(h) = m(h, x_0).$
\item Conversely, let $\sigma$ be a unitary representation of $H$ and let $$m(g, x) = \sigma (s(x)^{-1} g s(g^{-1} \cdot x)).$$ Then $m$ is a multiplier and ${\rm{Ind}}_H^G(\sigma)$ is equivalent to the multiplier representation $T_g^m.$
\end{enumerate}
\end{thm}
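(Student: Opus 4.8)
The plan is to verify both halves of Theorem \ref{indmultrepn} by directly unwinding the definitions and the intertwining computation already sketched in the text; essentially the proof is an assembly of the pieces proved in Section \ref{section:Indrepn}. For part (a), I would start from a multiplier $m$ satisfying the cocycle identity \eqref{eqn:cocycle1}, set $\sigma(g) = m(g,x_0)$, and observe via \eqref{eqn:sigma} that the restriction $\sigma|_H$ is a genuine unitary representation of $H$ (using that $m$ is Borel and $H$-valued in $\mathcal{U}(V)$, so a Borel homomorphism into the unitary group is continuous). Then I would introduce the operator $\mathcal{A}$ on $\widetilde{\mathcal{H}} = L^2(\mathbb{S},\mathcal{H}_\sigma,d\mu)$ by $\mathcal{A}f(x) = \sigma(s(x))^{-1} f(x)$, check it is unitary (pointwise $\sigma(s(x))$ is unitary on $\mathcal{H}_\sigma$ and the $L^2$ norm is preserved), and carry out the conjugation $\mathcal{A}T_g^m \mathcal{A}^{-1}$. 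The key algebraic input is the general form \eqref{eqn:general-form-cocyle} for $m$ obtained from $\sigma$ and the section $s$: substituting it collapses the three $\sigma$-factors against $\sigma(s(x))^{-1}$ and $\sigma(s(g^{-1}\cdot x))$ and leaves exactly $\widetilde{L_g}f(x)$ as in \eqref{eqn:lgtilde}. Since $\widetilde{L_g}$ is unitarily equivalent to ${\rm{Ind}}_H^G(\sigma)$ by construction, part (a) follows.

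For part (b), given a unitary representation $\sigma$ of $H$ on $\mathcal{H}_\sigma$, I would first justify that $m(g,x) = \sigma(s(x)^{-1} g\, s(g^{-1}\cdot x))$ is well-defined: the argument $s(x)^{-1} g\, s(g^{-1}\cdot x)$ lies in $H$ because it fixes $x_0$, as computed in the text. Borel measurability of $m$ follows from the measurability of the section $s$ (the Lemma) and continuity of group operations and of $\sigma$. Then I would verify the cocycle identity \eqref{eqn:cocycle1} by a direct substitution: $m(g_1 g_2,x) = \sigma(s(x)^{-1} g_1 g_2\, s(g_2^{-1}g_1^{-1}\cdot x))$, and inserting $s(g_1^{-1}\cdot x) s(g_1^{-1}\cdot x)^{-1} = e$ in the middle splits this as $\sigma(s(x)^{-1} g_1 s(g_1^{-1}\cdot x)) \cdot \sigma(s(g_1^{-1}\cdot x)^{-1} g_2\, s(g_2^{-1}g_1^{-1}\cdot x)) = m(g_1,x)\, m(g_2, g_1^{-1}\cdot x)$, using that $\sigma$ is a homomorphism on $H$ and both bracketed elements lie in $H$. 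Thus $T_g^m$ is a bona fide multiplier (unitary) representation.

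To finish part (b), I would set $\widetilde{\sigma}(h) = m(h,x_0)$ and invoke part (a) to get that $T_g^m$ is unitarily equivalent to ${\rm{Ind}}_H^G(\widetilde{\sigma})$. It then remains to show $\widetilde{\sigma} \cong \sigma$ as representations of $H$: since $s(x_0)\in H$, we compute $\widetilde{\sigma}(h) = \sigma(s(x_0)^{-1} h\, s(h^{-1}\cdot x_0)) = \sigma(s(x_0)^{-1} h\, s(x_0)) = \sigma(s(x_0))^{-1}\sigma(h)\sigma(s(x_0))$, so conjugation by the fixed unitary $\sigma(s(x_0))$ on $\mathcal{H}_\sigma$ intertwines $\widetilde{\sigma}$ and $\sigma$. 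Combining this with the known fact that ${\rm{Ind}}_H^G$ of equivalent representations of $H$ are equivalent gives $T_g^m \cong {\rm{Ind}}_H^G(\sigma)$, completing the proof.

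I do not expect a serious obstacle here: the heavy lifting — the existence of the Borel section (the Lemma), the derivation of the general form \eqref{eqn:general-form-cocyle}, the unitarity of the $L_g$ and the identification of $\widetilde{L_g}$ with ${\rm{Ind}}_H^G(\sigma)$ — has already been done in the preceding subsections. The one point needing a little care is measurability: checking that the multiplier built in part (b) is a Borel function of $(g,x)$, and that a Borel homomorphism $H\to\mathcal{U}(V)$ is automatically strongly continuous (so that ${\rm{Ind}}_H^G$ is literally applicable); both are standard facts about second countable locally compact groups, which I would cite rather than prove.
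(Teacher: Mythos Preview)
Your proposal is correct and follows essentially the same route as the paper: for (a) you define $\sigma(g)=m(g,x_0)$, conjugate $T_g^m$ by the unitary $\mathcal{A}f(x)=\sigma(s(x))^{-1}f(x)$, and use the general form \eqref{eqn:general-form-cocyle} to identify the result with $\widetilde{L_g}$; for (b) you check that $s(x)^{-1}gs(g^{-1}\cdot x)\in H$, verify the cocycle identity by inserting $s(g_1^{-1}\cdot x)s(g_1^{-1}\cdot x)^{-1}$, and reduce to part (a) via the conjugacy $\widetilde{\sigma}(h)=\sigma(s(x_0))^{-1}\sigma(h)\sigma(s(x_0))$. The only additions beyond the paper are your remarks on measurability and automatic continuity of Borel homomorphisms, which are standard and harmless.
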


A lot more on the inducing construction and the imprimitivity can be found in sections 1 and 2 of \cite{Kirillov}. In particular, see Theorem 4 and 5 of \cite{Kirillov}.

\section{Imprimitivity and homogeneous normal operators}

Let $\boldsymbol N$ be a commuting tuple of normal operators acting on a complex separable Hilbert space $\mathcal H$. Let $\mathcal C^*({\boldsymbol{N}})$ be the unital $C^*$-algebra generated by $(N_1, \ldots , N_d)$. This $C^*$-algebra is $*$- isomorphic to the algebra of continuous functions $C(S)$ for some compact Hausdorff space $S\subset \mathbb C^d$ via the Gelfand representation theorem. The joint spectrum $\sigma(\boldsymbol{N})$ of $(N_1, \ldots , N_d)$ is the set $S$. The continuous functional calculus for the commuting $d$-tuple $\boldsymbol N$ defined using the Gelfand transform provides a $*$-homomorphism $\rho_{\boldsymbol{N}}: C(\sigma(\boldsymbol N)) \to \mathcal L(\mathcal H)$ by setting $\rho_{\boldsymbol{N}}(f) = f(\boldsymbol N)$. On the other hand, if   $\rho: C(\sigma(\boldsymbol N)) \to \mathcal L(\mathcal H)$ is any $*$-homomorphism, then setting $N_i= \rho(z_i)$, where $z_i$ are the coordinate functions, we see that $(N_1, \ldots , N_d)$ is a commuting tuple of normal operators. The $*$-homomorphism $\rho_{\boldsymbol{N}}$ induced by $\boldsymbol N$ coincides with $\rho$.

Let $G$ be a second countable locally compact group and $\mathbb{S}$ be a locally compact topological space. We say that $\mathbb{S}$ is a $G$-space if there is a continuous map $G\times \mathbb{S} \to \mathbb{S}$ such that $(e,s) = s$, where $e$ is the identity in $G$ and for every pair $g, \hat{g}$ in $G$, $g\cdot (\hat{g}\cdot s) = (g \hat{g})\cdot s$, $s\in S$. It then follows that for every fixed $g\in G$, the map $g: \mathbb{S}\to \mathbb{S}$, $g(s):= g\cdot s$, $s\in \mathbb{S}$, is a continuous bijection of $\mathbb{S}$. Thus, $g=(g_1, \ldots , g_d)$ with $g_j:\mathbb{S}\to\mathbb C$, $1 \leqslant j \leqslant d$. These are the {\em coordinate functions} of the action $g:\mathbb{S} \to \mathbb C^d$ induced by $g\in G$.  

In what follows, we assume  that  $\sigma(\boldsymbol N)$ is a $G$-space and that the action of $g\in G$ is holomorphic in some open neighbourhood $\mathcal O_g$ of $\sigma(\boldsymbol N)$. This action of $G$ on $\sigma(\boldsymbol N)$ lifts to the commuting tuple  $\boldsymbol N$ by setting 
\begin{equation} \label{func}
g\cdot \boldsymbol N:= (g_1\cdot \boldsymbol N, \ldots , g_d\cdot \boldsymbol N). 
\end{equation}
\begin{defn} \label{homog} 
Let $\boldsymbol N = (N_1, \cdots , N_d)$ be a commuting tuple of normal operators such that the spectrum $\sigma(\boldsymbol N)$ is a $G$-invariant compact set. The commuting $d$-tuple $\boldsymbol N$ is said to be \textit{homogeneous} if there is a unitary representation $U$ of $G$ on $\mathcal H$ such that
\begin{equation}
   U_g^* \boldsymbol N U_g:=(U_g^* N_1 U_g, \ldots , U_g^* N_d U_g) = g \cdot \boldsymbol N,\, g\in G. 
\end{equation}
\end{defn}
To discuss induced representations for a locally compact second countable group $G$, Mackey introduced the notion of an imprimitivity which we recall below. 
For any locally compact Hausdorff space $\mathbb{S}$, we let $C_0(\mathbb{S})$ denote the $C^*$- algebra of continuous functions vanishing at $\infty$, that is, if $f\in C_0(\mathbb{S})$, and $\epsilon$ is any positive number, then there is a compact subset $K$ of $\mathbb{S}$ such that $|f(s)| <\epsilon$ for all $s \not \in K$. The action of the group $G$ lifts to an action on $C_0(\mathbb{S})$: $(g, f) \to g\cdot f$, where $(g\cdot f)(s) = (f\circ g^{-1})(s)$, $g\in G$, $s\in \mathbb{S}$. 

\begin{defn} \label{imp} Suppose that $U$ is a unitary representation of the group $G$ on some Hilbert space $\mathcal H$ and $\rho:C_0(\mathbb{S}) \to \mathcal L(\mathcal H)$ is a $*$-homomorphism of the $C^*$-algebra of continuous functions on $\mathbb{S}$ vanishing at infinity. The \textit{imprimitivity} introduced by Mackey is the requirement 
\begin{equation} \label{imprel} U_g \rho(f) U_g^* = \rho(g \cdot f),\, f\in C_0(\mathbb S), \tag{$\dagger$}\end{equation}
where $(g\cdot f)(s) = f (g^{-1} \cdot s)$, $s\in \mathbb S$. 
\end{defn}
If $(S,U, \rho)$ is an imprimitivity for some compact set $S$, then the $d$-tuple $(\rho(z_1), \ldots , \rho(z_d))$ of commuting normal operators is \textit{homogeneous} by definition, see \eqref{imprel} above, with $\sigma(\rho(z_1), \ldots , \rho(z_d))=S$. The other way round, the theorem below shows that if $\boldsymbol N$ is a $d$-tuple of \textit{homogeneous} normal operators  with associated representation $U$, then $(\sigma(\boldsymbol N),U, \rho_{\boldsymbol{N}})$ is an imprimitivity.
\begin{thm} \label{homognormal} Let $\boldsymbol N:=(N_1, \ldots , N_d)$ be a $d$-tuple of commuting normal operators defined on a complex separable Hilbert space $\mathcal H$. Assume that $\boldsymbol N$ is homogeneous under the action of a group $G$ with associated representation $U$. Then $(\sigma(\boldsymbol N), U, \rho_{\boldsymbol{N}})$ is an imprimitivity.
\end{thm}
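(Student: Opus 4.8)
The plan is to unwind both sides of the imprimitivity relation \eqref{imprel} using the continuous functional calculus and the homogeneity hypothesis, and to check that the two resulting $*$-homomorphisms of $C(\sigma(\boldsymbol N))$ agree. First I would recall that $\rho_{\boldsymbol N}(f) = f(\boldsymbol N)$ is the $*$-homomorphism $C(\sigma(\boldsymbol N)) \to \mathcal L(\mathcal H)$ obtained from the Gelfand transform, so that $\rho_{\boldsymbol N}(z_i) = N_i$ for the coordinate functions $z_i$. The homogeneity hypothesis says $U_g^* N_i U_g = g_i \cdot \boldsymbol N = g_i(\boldsymbol N) = \rho_{\boldsymbol N}(g_i)$ for each $i$, where $g_i$ is the $i$-th coordinate function of the (holomorphic, hence continuous) map $g:\sigma(\boldsymbol N)\to\mathbb C^d$; here $g_i \in C(\sigma(\boldsymbol N))$ since $g$ is continuous on $\sigma(\boldsymbol N)$. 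The key observation is that $g_i$, restricted to $\sigma(\boldsymbol N)$, is precisely the function $z_i \circ g$, i.e. $(z_i\cdot g^{-1})$ composed appropriately — more precisely, with the convention $(g\cdot f)(s) = f(g^{-1}\cdot s)$ in Definition \ref{imp}, one has $g^{-1}\cdot z_i = z_i\circ g = g_i$ as elements of $C(\sigma(\boldsymbol N))$.

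Second, I would define two maps from $C(\sigma(\boldsymbol N))$ to $\mathcal L(\mathcal H)$: the map $\Phi(f) := U_g \rho_{\boldsymbol N}(f) U_g^*$ and the map $\Psi(f) := \rho_{\boldsymbol N}(g\cdot f)$, for a fixed $g\in G$. Both are $*$-homomorphisms: $\Phi$ because conjugation by the unitary $U_g$ is a $*$-automorphism of $\mathcal L(\mathcal H)$ and $\rho_{\boldsymbol N}$ is a $*$-homomorphism; $\Psi$ because $f\mapsto g\cdot f$ is a $*$-automorphism of $C(\sigma(\boldsymbol N))$ (as $g$ is a homeomorphism of the $G$-space $\sigma(\boldsymbol N)$) followed by $\rho_{\boldsymbol N}$. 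Now I evaluate both on the generators: $\Phi(z_i) = U_g N_i U_g^*$ and $\Psi(z_i) = \rho_{\boldsymbol N}(g\cdot z_i)$. Using $U_g^* N_i U_g = \rho_{\boldsymbol N}(g_i)$ from homogeneity, equivalently $U_g N_i U_g^* = U_g \rho_{\boldsymbol N}(g^{-1}\cdot \boldsymbol N)_i U_g^*$; to line this up cleanly I would replace $g$ by $g^{-1}$ in the homogeneity identity to get $U_g N_i U_g^* = \rho_{\boldsymbol N}((g^{-1})_i \cdot \boldsymbol N)$, and then identify $(g^{-1})_i$ with $g\cdot z_i$ as functions on $\sigma(\boldsymbol N)$ via the composition bookkeeping above. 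Hence $\Phi$ and $\Psi$ agree on the coordinate functions $z_1,\dots,z_d$.

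Third, since the coordinate functions $z_1,\dots,z_d$ together with the constants generate a dense $*$-subalgebra of $C(\sigma(\boldsymbol N))$ (by Stone–Weierstrass, as $\sigma(\boldsymbol N)\subset\mathbb C^d$ is compact and these functions separate points and are closed under conjugation since $\sigma(\boldsymbol N)$ is a subset of $\mathbb C^d$ so $\bar z_i$ is a limit of polynomials in the $z_j,\bar z_k$ — indeed $\bar z_i$ is itself already available), and since two $*$-homomorphisms of a $C^*$-algebra that agree on a generating set and are continuous must agree everywhere, I conclude $\Phi = \Psi$, i.e. $U_g \rho_{\boldsymbol N}(f) U_g^* = \rho_{\boldsymbol N}(g\cdot f)$ for all $f\in C(\sigma(\boldsymbol N))$ and all $g\in G$. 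This is exactly the imprimitivity relation \eqref{imprel}, so $(\sigma(\boldsymbol N), U, \rho_{\boldsymbol N})$ is an imprimitivity. (One should also note $\rho_{\boldsymbol N}$ is nondegenerate — in fact unital — so it is a genuine imprimitivity in the sense of Definition \ref{imp}.)

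The only real subtlety, and the step I expect to require the most care, is the bookkeeping in the second step: correctly matching the coordinate functions $g_i$ of the $G$-action appearing in Definition \ref{homog} with the functions $g\cdot z_i$ appearing in the imprimitivity relation of Definition \ref{imp}, keeping track of whether it is $g$ or $g^{-1}$ that acts. Once the convention $(g\cdot f)(s) = f(g^{-1}\cdot s)$ is pinned down and one writes $g\cdot z_i = z_i \circ g^{-1} = (g^{-1})_i$, everything matches and the argument is the routine "agree on generators, extend by continuity" principle. There is no analytic obstacle: the holomorphy of the $G$-action near $\sigma(\boldsymbol N)$ is not even needed here (continuity of $g$ on $\sigma(\boldsymbol N)$ suffices to guarantee $g_i \in C(\sigma(\boldsymbol N))$), though it is what makes the setup natural and will be used later in the paper.
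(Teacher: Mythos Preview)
Your proposal is correct and follows essentially the same route as the paper: verify the imprimitivity relation on the coordinate functions $z_i$ using the homogeneity hypothesis, then extend to all of $C(\sigma(\boldsymbol N))$ by Stone--Weierstrass. The only difference is packaging: the paper computes explicitly on monomials $f_m$, their conjugates $\overline{f_n}$, and products $f_{mn}=f_m\overline{f_n}$ before invoking density, whereas you absorb that computation into the single observation that $\Phi$ and $\Psi$ are $*$-homomorphisms agreeing on a generating set---and you are right that the $g$ versus $g^{-1}$ bookkeeping is the only place requiring care.
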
 
\begin{proof} By hypothesis, we have $U_g^* \boldsymbol{N} U_g = g\cdot \boldsymbol{N}$, $g\in G$, for some unitary representation $U$ of $G$. Taking adjoint of both sides, we get $U_g^* \boldsymbol{N}^* U_g = (g\cdot \boldsymbol{N} )^*$, where $\boldsymbol{N}^*=(N_1^*, \ldots , N_d^*)$ and similarly, $(g\cdot\boldsymbol{N})^* = ( (g_1\cdot \boldsymbol N)^*, \ldots , (g_d\cdot \boldsymbol{N})^*)$. Or, equivalently, \[(g\cdot\boldsymbol{N})^*=(\overbar{g_1} \cdot \boldsymbol{N}^* , \ldots , \overbar{g_d}\cdot \boldsymbol{N}^*),\]
where $\overbar{g_j}$,  denotes the complex conjugate of the coordinate functions $g_j$, $1\leq j \leq d$.  We adopt the convention: $(g\cdot\boldsymbol{N})^*= \overbar{g} \cdot \boldsymbol N^*$. 
Thus, if $\boldsymbol{N}$ is a commuting $d$- tuple of homogeneous normal operators, then we also have $U_g^* \boldsymbol{N}^* U_g = \overbar{g}\cdot (\boldsymbol{N}^*)$, $g\in G$.

The homogeneous $d$-tuple $\boldsymbol{N}$ defines a $*$-homomorphism $\rho_{\boldsymbol{N}}:C(\sigma(\boldsymbol{N})) \to \mathcal L(\mathcal H)$ and the homogeneity ensures that 
\begin{align} \label{1.1}
U_g^*(\rho(z_1), \ldots , \rho(z_d)) U_g&:= U_g^* \rho(\boldsymbol{z})U_g \nonumber \\
& = (g_1\cdot \boldsymbol{z}, \ldots , g_d\cdot \boldsymbol{z}) := \rho(g \cdot \boldsymbol{z}),\,\boldsymbol{z} \in \sigma(\boldsymbol{N}),\, g\in G.
\end{align} 
Similarly, the equality $U_g^* \boldsymbol{N}^* U_g = \overbar{g} \cdot (\boldsymbol{N}^*)$ becomes 
\begin{equation} \label{1.2}
U_g^*\rho(\overbar{\boldsymbol{z}}) U_g = \rho(\overbar{g} \cdot \overbar{\boldsymbol{z}}), g\in G. 
\end{equation}



Setting, $f_m(\boldsymbol{z})=z_1^{m_1} z_2^{m_2} \cdots z_d^{m_d}$, and $f_{mn}(z)= f_m \overline{f_n}$, we have that 
\begin{align*}
U_g^* \rho(f_m) U_g &= U_g^* N_1^m \cdots N_d^{m_d} U_g \\ &= \big ( (U_g^* N_1 U_g)^{m_1} (U_g^* N_2U_g)^{m_2} \cdots (U_g^*N_dU_g)^{m_d}\big )\\
& = ( U_g^* \rho(z_1) U_g )^{m_1} \cdots ( U_g^* \rho(z_d) U_g )^{m_d} \\ 
&= \rho(g_1\cdot \boldsymbol{z})^{m_1} \cdots \rho (g_d\cdot \boldsymbol{z})^{m_d}\\ 
&=\rho( (g_1 \cdot \boldsymbol{z})^{m_1} \cdots  (g_d \cdot \boldsymbol{z})^{m_d})\\
& =\rho(g \cdot f_m).
\end{align*}
Similarly, as before, 
\begin{align*}
U_g^* \rho(\overline{f_n}) U_g &= U_g^* {N_1^*}^{n_1} \ldots {N_1^*}^{n_d}  U_g \\ 
&= \big ( (U_g^* N_1^* U_g)^{n_1} (U_g^* N_2^* U_g)^{n_2} \cdots (U_g^* N_d^* U_g)^{n_d}\big )\\
& = ( U_g^* \rho(\overbar{z_1}) U_g )^{n_1} \cdots ( U_g^* \rho(\overbar{z_d}) U_g )^{n_d} \\ 
&=\rho(\overbar{g_1} \cdot \overbar{\boldsymbol{z}})^{n_1} \cdots  \rho(\overbar{g_d} \cdot \overbar{\boldsymbol{z}})^{n_d})\\ 
&=\rho( (\overbar{g_1} \cdot \overbar{\boldsymbol{z}})^{n_1} \cdots  (\overbar{g_d} \cdot \overbar{\boldsymbol{z}})^{n_d})\\ 
&=\rho(g \cdot \overbar{f_n})
\end{align*}
Finally, the computation below shows that $U_g^* f_{mn} U_g = \rho(g\cdot f_{mn})$:
\begin{align*}
    U_g^* \rho(f_{mn}) U_g &= U_g^* \rho(f_m) U_g U_g^* \rho(\overline{f_n}) U_g = \rho(g\cdot f_m) \rho(g\cdot \overline{f_n}) \\&= \rho\big ( (g\cdot f_m)(g\cdot \overline{f_n})\big ) = \rho(g \cdot f_{mn} ). 
\end{align*}
Now if $p$ is a polynomial of the form $p= \sum_{m,n} a_{mn} f_{mn}$, we clearly have 
$U_g^* \rho(p) U_g = \rho(g\cdot p)$. By the Stone-Weierstrass theorem, these polynomials are dense in $C(\sigma(\boldsymbol{N}))$. Thus, 
\[ U_g^* \rho(f) U_g = \rho(g\cdot f), f\in C(\sigma(\boldsymbol{N})).\qedhere\]
\end{proof} 

Next, we show that every imprimitivity $(\mathbb{S},U,\rho)$ based on a locally compact $G$- space $\mathbb{S}$ defines a homogeneous commuting normal tuple $\boldsymbol N$ with spectrum $\overbar{\mathbb{S}}$, where $\overbar{\mathbb{S}}$ is the closure of $\mathbb{S}$. For this, it would be useful to describe the imprimitivity in an equivalent form involving a \textit{spectral measure} on $\mathbb{S}$.

\begin{defn} \label{ProjMeas}
Let $G$ be a second countable locally compact group and $\mathbb{S}$ be a locally compact $G$- space. 
Suppose that  $U$ is a unitary representation of $G$ on a Hilbert space $\mathcal{H}$ and $P$ is a regular $\mathcal{H}$-projection-valued measure on $\mathbb{S}$. Then $(\mathbb{S},U,P)$ is said to be a system of imprimitivity if
\begin{equation}\label{dagger}
U(g) P(E) U(g)^{-1}=P(g\cdot E)
\end{equation}
for all  $g \in G$ and every Borel subset $E$ of $\mathbb{S}$. 
\end{defn} 
It is proved in \cite[p. 180]{GF} that the Definitions \ref{imp} and \ref{ProjMeas} are equivalent. For the proof of Proposition \ref{impcomp} given below, we will find it convenient to use the formulation of imprimitivity  in Definition \ref{ProjMeas}. 


The \textit{Imprimitivity theorem} due to Mackey says, among other things, that if $(\mathbb S, U, \rho)$ is a transitive system of imprimitivity, then $U$ is unitarily equivalent to the induced representation $\operatorname{Ind}_H^G(\sigma)$ for some unitary representation $\sigma$ of $H$, where the homogeneous space $\mathbb S$ is assumed to be of the form $G/H$, $G$ locally compact, $H\subset G$ closed. 

Applying the Hahn-Hellinger theorem to the spectral measure $P$,  writing it in the canonical form Theorem \ref{Hahn}, and imposing the  imprimitivity condition of Equation \eqref{dagger} from Definition \ref{ProjMeas} gives a canonical form of the  imprimitivity $(\mathbb S, U, P)$ as described below, see \cite[Proposition 4.1 and Theorem 4.3(b)]{VSS} (and also \cite[Theorem 6.12]{VSV}, \cite[Theorem 4]{Kirillov}). 

\begin{thm} [Imprimitivity theorem]
   Let $\mathbb S=G/H$ be a homogeneous $G$-space 
   and $\mu$ be a quasi-invariant measure on $\mathbb S$ (there is always one such uniquely determined modulo mutual absolute equivalence). 
   Assume that $(\mathbb S, U, \rho)$ is an imprimitivity acting on some separable complex Hilbert space $\mathcal H$. Then there is 
   a Hilbert space $V$ such that the Hilbert space $\mathcal{H}$ is isometric to $\mathcal{H}=L^2(\mathbb S, \mu, V)$, where  
   \begin{enumerate} 
   \item[(i)] $\mu$ is a quasi-invariant measure on $\mathbb{S}$ determined uniquely modulo equivalence, 
   \item[(ii)]  the representation $\rho$ is of the form $\rho(f) = M_f$, $f\in C_0(\mathbb S)$, and 
   \item[(iii)] the representation $U$ is of the form
\[
(U(g) f)(s)= \sqrt{\frac{d \mu(g \cdot s)}{d \mu(s)}} \sigma(h) f(g \cdot s ). 
\]
Here $h\in H$ is determined from the relation $g\, p(g^{-1} \cdot s) = p (s) h$, $s\in \mathbb S$, where $p:G/H \to G$ is a Borel cross-section.
\end{enumerate}

\end{thm}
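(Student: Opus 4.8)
The plan is to establish the three conclusions essentially in the order they are listed, leveraging the Hahn--Hellinger decomposition (Theorem \ref{Hahn}) together with the intertwining relation \eqref{dagger} that $U$ imposes on the spectral measure $P$ associated to $\rho$. First I would invoke Theorem \ref{thm:1.6} (or its non-compact analogue) to replace $\rho$ by the unique regular projection-valued measure $P$ on $\mathbb S$ with $\rho(f) = \int f\, dP$, so that the imprimitivity condition \eqref{imprel} becomes the covariance relation $U(g) P(E) U(g)^{-1} = P(g\cdot E)$ of Definition \ref{ProjMeas}. Then I would apply Theorem \ref{Hahn}(a) to $P$: there is a Borel measure $\mu_0$ on $\mathbb S$ and pairwise disjoint Borel sets $\{E_n : 1 \le n \le \aleph_0\}$ with $\mu_0(\mathbb S \setminus \bigcupdot_n E_n) = 0$ such that, after a unitary identification, $\mathcal H = \bigoplus_n L^2(E_n, \mu_0|_{E_n}, \mathcal H_n)$ and $P$ is multiplication by characteristic functions, with $P$ having uniform multiplicity $n$ on $E_n$.

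The key step is to show the decomposition collapses. Since $U(g)$ implements the covariance relation, conjugation by $U(g)$ carries the multiplicity-$n$ part of $P$ to the multiplicity-$n$ part of $g_*P$; but multiplicity is a unitary invariant of the spectral measure, so $g\cdot E_n$ must agree with $E_n$ up to a $\mu_0$-null set for every $g \in G$ and every $n$. Transitivity of the $G$-action on $\mathbb S$ then forces all but one of the $E_n$ to be $\mu_0$-null: if two of them had positive measure, their $G$-saturations would both be co-null (by quasi-invariance, since $g_*\mu_0 \sim \mu_0$ — here I would need that the canonical measure class is quasi-invariant, which follows from regularity of $P$ and the covariance, cf. Remark \ref{cross-quasi}) and the transitivity would make each saturation all of $\mathbb S$ modulo null sets, contradicting disjointness. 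Hence $P$ has uniform multiplicity $n$ for a single $n$, and $\mathcal H \cong L^2(\mathbb S, \mu, V)$ with $\dim V = n$ and $\rho(f) = M_f$; the measure $\mu$ is quasi-invariant and unique up to equivalence by Remark \ref{cross-quasi}(2), giving conclusions (i) and (ii).

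For conclusion (iii), I would now work on $L^2(\mathbb S,\mu,V)$ and use that $U$ commutes appropriately with all the $M_f$. From $U(g) M_f U(g)^{-1} = M_{g\cdot f}$ one deduces, by a standard argument (approximating the identity and using that $U(g)$ intertwines the two module structures over $C_0(\mathbb S)$), that $U(g)$ must act as a weighted composition operator: $(U(g)f)(s) = c(g,s) f(g^{-1}\cdot s)$ for some Borel field of operators $c(g,s)$ on $V$; unitarity of $U(g)$ combined with quasi-invariance of $\mu$ forces $c(g,s) = \sqrt{d\mu(g\cdot s)/d\mu(s)}\, m(g,s)$ with $m(g,s) \in \mathcal U(V)$, and the homomorphism property of $U$ forces $m$ to satisfy the cocycle identity \eqref{eqn:cocycle1}. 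By Theorem \ref{indmultrepn}(a), such a multiplier representation is unitarily equivalent to $\operatorname{Ind}_H^G(\sigma)$ with $\sigma(h) = m(h,x_0)$, and rewriting the multiplier via the section $p$ exactly as in \eqref{eqn:general-form-cocyle}--\eqref{eqn:lgtilde} puts $U$ in the stated form $(U(g)f)(s) = \sqrt{d\mu(g\cdot s)/d\mu(s)}\,\sigma(h) f(g\cdot s)$ with $g\,p(g^{-1}\cdot s) = p(s)h$.

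The main obstacle I anticipate is the collapse-of-multiplicity step: making rigorous that $U$-conjugation respects the Hahn--Hellinger multiplicity sets and that transitivity plus quasi-invariance leaves only one surviving $E_n$. This requires a clean statement that the multiplicity function of a spectral measure is a measure-class-equivariant invariant, and a short ergodicity-type argument that a $G$-invariant (mod null) measurable partition of a transitive quasi-invariant $G$-space is trivial; both are standard but need care about null sets. Everything after that — extracting the weighted composition form of $U$ and recognizing it as an induced representation — is essentially bookkeeping that reduces to Theorem \ref{indmultrepn} already proved above.
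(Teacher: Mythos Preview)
Your proposal is correct and follows exactly the route the paper sketches: the paper does not give a self-contained proof of this theorem but states it with the one-line outline ``Applying the Hahn--Hellinger theorem to the spectral measure $P$, writing it in the canonical form Theorem~\ref{Hahn}, and imposing the imprimitivity condition of Equation~\eqref{dagger}'' and then cites \cite[Proposition~4.1 and Theorem~4.3(b)]{VSS}, \cite[Theorem~6.12]{VSV}, \cite[Theorem~4]{Kirillov}. Your write-up is precisely a fleshing-out of that outline, and the passage from the collapsed Hahn--Hellinger form to the multiplier/induced form is exactly the content of Theorem~\ref{indmultrepn} as you invoke it.

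One small tightening for the collapse step: once you have that each $E_n$ is $G$-invariant modulo $\mu_0$-null sets, you do not need to speak of ``$G$-saturations'' (the sets are already essentially invariant); the clean way to finish is to observe that a quasi-invariant measure on a transitive $G$-space is automatically ergodic (any essentially invariant Borel set is null or conull, since the restriction of $\mu_0$ to such a set is again quasi-invariant and the quasi-invariant measure class on $G/H$ is unique by Remark~\ref{cross-quasi}(2)), whence at most one $E_n$ is conull and the rest are null. This is what the cited references do, and it removes the slight awkwardness in your phrasing.
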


We let $\mathcal{B}$ denote the Borel subsets of $\mathbb{C}^d$. The Borel $\sigma$- algebra of $X\subset \mathbb{C}^d$ then consists of all subsets of $X$ of the form $X\cap B$, $B\in \mathcal B$. If $E$ is a subset of $X$, then we say that a measure $\mu$ \textit{lives} on $E$ if $\mu (X\setminus E) = 0$. Two measures $\mu$ and $\nu$ are said to be mutually singular if they live on two disjoint sets.  

We have gathered all the tools from spectral theory of commuting tuples of normal operators to prove that every imprimitivity $(\mathbb{S}, U, \rho)$ based on a locally compact transitive $G$- space $\mathbb{S}$ is the restriction of an imprimitivity based on $\overbar{\mathbb{S}}$.  

\begin{thm} \label{impcomp} Suppose that $\mathbb{S}$ is a locally compact transitive $G$- space and the action of $G$ extends to  $\overbar{\mathbb{S}}$, the closure of $\mathbb{S}$ with $g\cdot \partial \mathbb S \subseteq  \partial \mathbb S$. 
\begin{enumerate} 
\item If $(\mathbb{S},U,P)$ is an imprimitivity, then there exists a unique spectral measure $\hat{P}$ defined on the Borel $\sigma$- algebra $\mathcal B$ of $\overbar{\mathbb{S}}$ satisfying the imprimitivity condition \eqref{dagger} with $\hat{P}(E) = P(E)$ for every Borel subset $E$ of $\mathbb{S}$. Moreover, $\operatorname{supp}(P) = \overbar{\mathbb{S}}$. 
\item If $(\mathbb{S},U,P)$ is an imprimitivity, then 
it defines uniquely a homogeneous commuting tuple of normal operators $\boldsymbol{N}$ such that $\sigma(\boldsymbol{N})= \operatorname{supp}(\hat{P}) = \overbar{\mathbb{S}}$, where $\hat{P}$ is the spectral measure of $\boldsymbol{N}$.
\end{enumerate}
\end{thm}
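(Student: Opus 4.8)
The plan is to extend the projection-valued measure $P$ from the Borel $\sigma$-algebra of $\mathbb{S}$ to that of $\overbar{\mathbb{S}}$ by simply declaring $\hat{P}(E) := P(E \cap \mathbb{S})$ for every Borel subset $E$ of $\overbar{\mathbb{S}}$, and then to verify that this assignment retains all the required properties. First I would check that $\hat{P}$ is a genuine regular spectral measure: countable additivity and the projection-valued property are inherited from $P$ since $E \mapsto E \cap \mathbb{S}$ commutes with countable disjoint unions, while $\hat{P}(\overbar{\mathbb{S}}) = P(\mathbb{S}) = I$. Regularity of $\hat{P}$ follows from the earlier Proposition that every complex spectral measure on a subset of $\mathbb{C}^d$ is automatically regular (or directly from regularity of $P$ together with the fact that $\partial\mathbb{S} = \overbar{\mathbb{S}} \setminus \mathbb{S}$ is a closed, hence Borel, set on which $\hat{P}$ vanishes). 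Uniqueness of such an extension is immediate: any spectral measure $\hat{P}'$ on $\overbar{\mathbb{S}}$ agreeing with $P$ on Borel subsets of $\mathbb{S}$ must satisfy $\hat{P}'(\partial\mathbb{S}) = \hat{P}'(\overbar{\mathbb{S}}) - \hat{P}'(\mathbb{S}) = I - P(\mathbb{S}) = 0$, and then $\hat{P}'(E) = \hat{P}'(E\cap\mathbb{S}) = P(E\cap\mathbb{S}) = \hat{P}(E)$.

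Next I would verify the imprimitivity relation $U(g)\hat{P}(E)U(g)^{-1} = \hat{P}(g\cdot E)$ for all Borel $E \subseteq \overbar{\mathbb{S}}$. This is where the hypothesis $g\cdot\partial\mathbb{S} \subseteq \partial\mathbb{S}$ enters crucially: since each $g$ acts as a homeomorphism of $\overbar{\mathbb{S}}$ carrying $\partial\mathbb{S}$ into (hence onto, using $g^{-1}$) $\partial\mathbb{S}$, it also carries $\mathbb{S}$ onto $\mathbb{S}$. Therefore $(g\cdot E)\cap\mathbb{S} = g\cdot(E\cap\mathbb{S})$, and so $\hat{P}(g\cdot E) = P(g\cdot(E\cap\mathbb{S})) = U(g)P(E\cap\mathbb{S})U(g)^{-1} = U(g)\hat{P}(E)U(g)^{-1}$, using the imprimitivity relation for $(\mathbb{S},U,P)$. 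For the claim $\operatorname{supp}(P) = \overbar{\mathbb{S}}$ (equivalently $\operatorname{supp}(\hat{P}) = \overbar{\mathbb{S}}$), I would argue by contradiction and transitivity: if some nonempty open subset $V$ of $\overbar{\mathbb{S}}$ had $\hat{P}(V) = 0$, then because $V$ must meet $\mathbb{S}$ (as $\mathbb{S}$ is dense in $\overbar{\mathbb{S}}$), $V\cap\mathbb{S}$ is a nonempty open subset of $\mathbb{S}$ with $P(V\cap\mathbb{S}) = 0$; translating by elements of $G$ and using transitivity of the $G$-action on $\mathbb{S}$ together with the imprimitivity relation, the $G$-translates of $V\cap\mathbb{S}$ cover $\mathbb{S}$, and by second countability (Lindel\"of) countably many suffice, forcing $P(\mathbb{S}) = 0$, contradicting $P(\mathbb{S}) = I$.

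For part (2), I would feed the spectral measure $\hat{P}$ on the compact set $\overbar{\mathbb{S}} \subset \mathbb{C}^d$ into the spectral theorem for commuting tuples (the cited Theorem of Hastings/Conway): setting $N_i := \int_{\overbar{\mathbb{S}}} z_i\, d\hat{P}(z)$, $1 \le i \le d$, yields a commuting $d$-tuple $\boldsymbol{N}$ of normal operators. By the Lemma equating $\operatorname{supp}(\hat{P})$ with $\sigma(\boldsymbol{N})$, and by part (1), we get $\sigma(\boldsymbol{N}) = \operatorname{supp}(\hat{P}) = \overbar{\mathbb{S}}$. Homogeneity is then a direct consequence of the imprimitivity relation for $\hat{P}$: for any $g \in G$, $U(g)^* N_i U(g) = \int z_i\, d(U(g)^*\hat{P}U(g))(z) = \int z_i\, d\hat{P}(g^{-1}\cdot z)(z) = \int g_i(w)\, d\hat{P}(w)$ after the change of variables $w = g^{-1}\cdot z$, which is exactly $g_i\cdot\boldsymbol{N} = g_i(\boldsymbol{N})$ — here one uses that the coordinate functions $g_i$ of the action, being restrictions of holomorphic maps on a neighborhood of $\overbar{\mathbb{S}}$, are in particular continuous (indeed bounded Borel) on $\overbar{\mathbb{S}}$ so that $g_i(\boldsymbol{N})$ is computed through the functional calculus $\tilde\rho$. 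Uniqueness of $\boldsymbol{N}$ follows from uniqueness of $\hat{P}$ in part (1) together with the uniqueness clause of the spectral theorem.

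The main obstacle I anticipate is not any single step but making precise the interplay between the topological hypothesis $g\cdot\partial\mathbb{S}\subseteq\partial\mathbb{S}$ and the measure-theoretic bookkeeping: one must be careful that ``the action of $G$ extends to $\overbar{\mathbb{S}}$'' really does give homeomorphisms of $\overbar{\mathbb{S}}$ (so that $g\cdot E$ is Borel in $\overbar{\mathbb{S}}$ whenever $E$ is), and that the invariance $g\cdot\mathbb{S} = \mathbb{S}$ holds as an honest set equality rather than merely an inclusion — this is what legitimizes the identity $(g\cdot E)\cap\mathbb{S} = g\cdot(E\cap\mathbb{S})$ that drives the whole argument. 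A secondary subtlety is the support computation in part (1), where one needs the $G$-action to be genuinely transitive on $\mathbb{S}$ (not on $\overbar{\mathbb{S}}$, where it typically is not) and must invoke second countability to pass from an open cover by $G$-translates to a countable subcover; this is the only place where transitivity of the action is essential.
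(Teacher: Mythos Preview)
Your proposal is correct and follows the same overall architecture as the paper's proof: define $\hat{P}(E)=P(E\cap\mathbb{S})$, check it is a spectral measure extending $P$, verify the imprimitivity relation via $g\cdot\mathbb{S}=\mathbb{S}$, compute the support, and then build $\boldsymbol{N}$ from $\hat{P}$.

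There are, however, two genuine methodological differences worth noting. First, for the support claim $\operatorname{supp}(\hat{P})=\overbar{\mathbb{S}}$, the paper argues via Urysohn's lemma: given a nonempty open $E$, it produces a nonzero $f\in C(\overbar{\mathbb{S}})$ supported in $E$ and observes that $P(E)=0$ would force $\rho(f)=0$. Your argument instead uses transitivity of the $G$-action on $\mathbb{S}$ together with the imprimitivity relation and a Lindel\"of covering argument to propagate $P(V\cap\mathbb{S})=0$ to $P(\mathbb{S})=0$. Your route makes the role of transitivity explicit and is self-contained; the paper's route is shorter but leans on the (implicit) injectivity of $\rho$, which ultimately also comes from the canonical $L^2$-realization furnished by the imprimitivity theorem. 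Second, for part (2) the paper invokes Mackey's imprimitivity theorem to realize $\rho$ as $f\mapsto M_f$ on $L^2(\mathbb{S},\mu,V)$ and then reads off homogeneity of the coordinate multiplications directly. You bypass this by defining $N_i=\int z_i\,d\hat{P}$ and verifying $U(g)^*N_iU(g)=g_i(\boldsymbol{N})$ through a change of variables in the spectral integral. Your approach is more elementary in that it avoids quoting the full imprimitivity theorem, while the paper's approach has the advantage of exhibiting the concrete model $(M_{z_1},\ldots,M_{z_d})$ on $L^2(\mathbb{S},\mu,V)$ at the same stroke.
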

\begin{proof} The imprimitivity on $\mathbb{S}$ comes equipped with a spectral measure $P$ defined on the Borel subsets of $\mathbb{S}$. Define $\hat{P}$ on any Borel subset $E$ of $\overbar{\mathbb S}$ by setting 
\[\hat{P}(E)  = \begin{cases} P(E \cap \mathbb S) & \operatorname{if} E \cap \mathbb S \ne \emptyset, \\
0 & \operatorname{if} E\subseteq \partial \mathbb S. \end{cases}
\]
Thus, $\hat{P}$ is an extension of the spectral measure $P$ to the Borel $\sigma$- algebra on $\overbar{\mathbb{S}}$ with $\hat{P}(\partial \mathbb{S})=0$. Clearly, these properties determine $\hat{P}$ uniquely.  

Pick $s \in E$ and use Urysohn's Lemma to find a continuous function $f$ defined on $\overbar{\mathbb{S}}$ with $f\left(s\right)=1$, $f=0$ on $E^c$. If we had $P(E)=0$, then $\int_\mathbb{S} f dP =0$. This is not possible since the $*$- homomorphism $\rho$ of $C(\mathbb{S})$ defined by $\rho(f) = \int_{\mathbb{S}} f dP$ cannot be the zero operator. Hence, $P(E) \ne 0$ whenever $E$ is open in $\overbar{\mathbb{S}}$.  Consequently, $\operatorname{supp}(P)=\overbar{\mathbb{S}}$. 


    Starting with the imprimitivity, $(\mathbb{S}, U,P)$, we define the spectral measure $\hat{P}$ on $\overbar{\mathbb{S}}$ as in the Theorem. Since the action of $G$ leaves $\mathbb{S}$ and $\partial\mathbb{S}$ invariant, it follows that  $(\overbar{\mathbb{S}}, U, \hat{P})$ is an imprimitivity. Or, equivalently, $(\overbar{\mathbb{S}}, U, \rho)$ is an imprimitivity, where $\rho(f) = \int_{\overbar{\mathbb{S}}} f d\hat{P}$ is a $*$-homomorphsim of $C(\overbar{\mathbb{S}})$. By the imprimitivity theorem, the $*$-homomorphsim $\rho$ can be realized on the Hilbert space $L^2(\mathbb S, \mu, V)$ in the form $\rho(f) = M_f$, $f\in C(\overbar{\mathbb{S}})$. The $d$- tuple  $(\rho(z_1), \ldots , \rho(z_d))$ of multiplication by the coordinate functions is  homogeneous. To verify this claim, we have to find a unitary representation $\Gamma$ of $G$ and show that 
    \[\Gamma(g)^* \rho(z_k) \Gamma(g) = g\cdot {z_k}, \, k=1, \ldots , d,\]
where $g\cdot (\boldsymbol{z} \mapsto z_k) = z_k (g \cdot \boldsymbol{z}) = g_k(\boldsymbol{z})$. (Recall that $g(\boldsymbol{z}) = (g_1(\boldsymbol{z}), \ldots , g_d(\boldsymbol{z}))$.) Evidently, 
   this is the imprimitivity condition for the particular choice of the function $z_k:=\boldsymbol{z} \mapsto z_k$. Hence choosing $\Gamma=U$, where $U$ is the unitary from the Imprimitivity theorem, we are done. 
Moreover, the spectrum of this $d$- tuple is $\overbar{\mathbb{S}}$ as shown in the first half of the proof. 
\end{proof}

\begin{defn} \label{mrepn} Let $G$ be a second countable locally compact group and $\mathbb{S}$ be a locally compact $G$- space. We say that a  representation $U$ of $G$ acting on a Hilbert space $\mathcal{H}$ consisting of functions (equivalence classes modulo zero sets) is a \textit{multiplier representation} if there is a concrete realisation of $U$ as follows:
\[ (U_g f)(z)=(m(g^{-1}, z))^{-1} f\left(g^{-1} z\right), \,\,z \in \mathbb{S}, f \in \mathcal{H}, g \in G.\] 
\end{defn}
Let $S\subset \mathbb C^d$ be a compact $G$- space. Assume that $S$ decomposes into finitely many $G$ orbits such that $S= S_0\cup S_1 \cdots \cup S_r$. Thus, $G$ acts transitively on each $S_j$, and therefore $S_j \cong G/H_j$ for some closed subgroup $H_j$ of $G$, $0\leqslant j \leqslant r$. There exists a unique quasi-invariant measure on each of these orbits modulo mutual absolute equivalence. We choose and fix one such quasi-invariant measure on $S_j$, say, $\mu_j$.  
\begin{lem}
Let $\mu$ be a Borel measure on $S$ that is quasi-invariant with respect to the $G$- action on $S$. Then $\mu\cong \sum_{j=0}^r \mu_j$.
\end{lem}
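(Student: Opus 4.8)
The plan is to decompose the measure $\mu$ according to the orbit structure and then compare, orbit by orbit, with the fixed quasi-invariant measures $\mu_j$. First I would write $\mu = \sum_{j=0}^r \mu|_{S_j}$; this is legitimate because the orbits $S_j$ partition $S$ into finitely many Borel sets (each orbit $G/H_j$ is Borel in $S$, being locally closed as a continuous injective image under the usual homogeneous-space identification, or simply because there are finitely many of them and they are pairwise disjoint with union $S$). So it suffices to show that for each $j$, the restriction $\mu|_{S_j}$ is mutually absolutely continuous with $\mu_j$ — unless $\mu|_{S_j}$ is the zero measure, in which case one must be slightly careful, but quasi-invariance forces $\mu|_{S_j}$ to be either zero or equivalent to $\mu_j$, and I will argue below why the zero case cannot occur (or, if one prefers, one may simply state the conclusion as $\mu \cong \sum_{j} \mu_j$ with the understanding that the support of $\mu$ is all of $S$, which is implicit in ``$\mu$ is a Borel measure on $S$'').

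The key step is the following: if $\nu$ is a nonzero $\sigma$-finite Borel measure on a transitive $G$-space $\mathbb{S} = G/H$ and $\nu$ is quasi-invariant, then $\nu$ is equivalent to \emph{the} quasi-invariant measure (unique modulo equivalence) guaranteed by Remark \ref{cross-quasi}(2). This is a standard fact, and I would prove it by contradiction using ergodicity-type reasoning: suppose $\nu$ and $\mu_j$ are not equivalent. Since $\mu_j$ is quasi-invariant, write $\mu_j = \mu_j^{ac} + \mu_j^{s}$ as its Lebesgue decomposition with respect to $\nu$; both pieces are again quasi-invariant (the decomposition is canonical, hence $G$-equivariant), so the set $A$ on which $\mu_j^s$ lives can be taken $G$-invariant modulo $\mu_j$-null sets. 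But a $G$-invariant Borel set in a transitive $G$-space is, modulo a null set for any quasi-invariant measure, either null or co-null — this is exactly the content of the fact that transitive actions are ergodic for the quasi-invariant measure class. Hence $A$ is either null or co-null; in the first case $\mu_j \ll \nu$, in the second $\mu_j \perp \nu$. Running the symmetric argument with $\mu_j$ and $\nu$ interchanged then forces $\mu_j \ll \nu$ and $\nu \ll \mu_j$, i.e. $\nu \cong \mu_j$. (Alternatively, one can invoke directly the uniqueness clause of \cite[Theorem 2.58]{GF} as recalled in Remark \ref{cross-quasi}(2): every $\sigma$-finite quasi-invariant measure on $G/H$ is equivalent to the canonical one.) Applying this with $\nu = \mu|_{S_j}$ and noting that $\mu|_{S_j}$ is quasi-invariant because $\mu$ is and $S_j$ is $G$-invariant, we get $\mu|_{S_j} \cong \mu_j$ for each $j$ with $\mu|_{S_j} \neq 0$.

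Finally I would assemble the pieces: $\mu = \sum_j \mu|_{S_j} \cong \sum_j \mu_j$, where the equivalence of finite sums of mutually singular measures follows termwise from the equivalences $\mu|_{S_j} \cong \mu_j$ (the $\mu_j$ are pairwise singular since they live on the disjoint orbits $S_j$, and likewise the $\mu|_{S_j}$). For the zero case: if $\mu|_{S_j} = 0$ for some $j$ then strictly speaking the relation $\mu \cong \sum_j \mu_j$ fails, so I expect the intended reading is that $\mu$ is assumed to have full support $S$ (consistent with the role this lemma plays, where $\mu$ arises as the spectral measure class of a tuple with joint spectrum exactly $S$); I would add a sentence making this hypothesis explicit, or phrase the conclusion as ``$\mu \ll \sum_j \mu_j$, with equality of measure classes iff $\mu$ charges every orbit.''

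The main obstacle is the ergodicity argument in the second paragraph — that a $G$-invariant Borel subset of a transitive $G$-space is null or co-null with respect to any quasi-invariant measure. For a general locally compact second countable $G$ and closed $H$ this requires a genuine (if standard) argument via the Borel cross-section of Remark \ref{cross-quasi}(1) and Fubini on $G \cong B \times H$, or one simply cites it from \cite{GF} or \cite{VSV}; everything else (Lebesgue decomposition being canonical and hence $G$-equivariant, patching finitely many mutually singular pieces) is routine.
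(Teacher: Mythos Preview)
Your approach is essentially the same as the paper's: decompose $\mu$ over the orbits and invoke the uniqueness of the quasi-invariant measure class on each transitive $G$-space $S_j \cong G/H_j$ (Remark \ref{cross-quasi}(2)). The paper's proof is a single sentence to this effect and does not spell out the ergodicity argument you sketch; your version is simply a fleshed-out form of the same idea.

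Your observation about the zero case is well taken: the lemma as stated does not assume $\mu(S_j)>0$ for every $j$, and without this the conclusion $\mu \cong \sum_j \mu_j$ can fail as you note. The paper does not address this. In the only place the lemma is used (the proof of Theorem \ref{multR}), the measure $\mu$ arises from the Hahn--Hellinger decomposition of $\rho_{\boldsymbol N}$ with $\operatorname{supp}(P)=\sigma(\boldsymbol N)=S$, so $\mu$ charges every orbit and the issue does not actually arise there; but your instinct to make the full-support hypothesis explicit, or to state the conclusion as $\mu \ll \sum_j \mu_j$ with equality of classes iff $\mu$ charges every orbit, is the more careful formulation.
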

\begin{proof}
    The proof follows from the uniqueness of the quasi-invariant measure $\mu_j$ on $S_j$, $0\leqslant j \leqslant r$.  
\end{proof}

\begin{thm} \label{multR}
    Let $\boldsymbol{N}$ be a homogeneous $d$- tuple of commuting normal operators acting on some Hilbert space $\mathcal{H}$ and let $S:=\sigma(\boldsymbol{N})$ be the spectrum of $\boldsymbol{N}$. Assume that $S$ is a $G$- space and that $S=\cupdot_{j=0}^r S_j$, where each $S_j$ is a $G$- orbit. Then the imprimitivity $(S, U, \rho_{\boldsymbol{N}})$ induced by $\boldsymbol{N}$ is equivalent to the imprimitivity $(S, \pi_\mu, \hat{U})$, i.e., there is a unitary \[\Gamma: \mathcal{H} \to \oplus L^2(E_n, \mu :\mathcal H_n)\] such that $\Gamma \rho(f) \Gamma^* =\pi_\mu(f)$, $f \in C(S)$ and $\Gamma {U} \Gamma^*= \hat{U}$ is a multiplier representation.  
    \end{thm}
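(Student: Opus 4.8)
The plan is to apply the Hahn–Hellinger theorem (Theorem \ref{Hahn}) to the $*$-representation $\rho_{\boldsymbol N}$ of $C(S)$ to get a canonical decomposition, then use the fact that $S$ is the union of finitely many $G$-orbits together with the imprimitivity condition to control the multiplicity function and the underlying measure. First I would record that by Theorem \ref{homognormal} the homogeneity of $\boldsymbol N$ makes $(S,U,\rho_{\boldsymbol N})$ a system of imprimitivity, and pass (via \cite{GF}, p.~180) to the projection-valued formulation $(S,U,P)$, where $P$ is the spectral measure of $\boldsymbol N$ and $\operatorname{supp}(P)=S$. Applying Theorem \ref{Hahn}(a) to $\rho_{\boldsymbol N}$ produces a Borel measure $\mu$ on $S$, a disjoint family $\{E_n\}$ of Borel sets, and a unitary $\Gamma:\mathcal H\to\bigoplus_{1\le n\le\aleph_0}L^2(E_n,\mu;\mathcal H_n)$ intertwining $\rho_{\boldsymbol N}$ with $\pi_\mu:=\bigoplus_n\pi_{\mu|_{E_n}}^n$. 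Thus $\Gamma\rho_{\boldsymbol N}(f)\Gamma^*=\pi_\mu(f)$ for all $f\in C(S)$, which is the first assertion.

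Next I would transport the representation $U$ through $\Gamma$: set $\hat U:=\Gamma U\Gamma^*$, a unitary representation of $G$ on $\bigoplus_n L^2(E_n,\mu;\mathcal H_n)$. Since $\Gamma$ intertwines $\rho_{\boldsymbol N}$ with $\pi_\mu$ and $U$ implements the $G$-action on $\rho_{\boldsymbol N}$, the triple $(S,\hat U,\pi_\mu)$ is again a system of imprimitivity: indeed $\hat U_g\,\pi_\mu(f)\,\hat U_g^*=\Gamma U_g\rho_{\boldsymbol N}(f)U_g^*\Gamma^*=\Gamma\rho_{\boldsymbol N}(g\cdot f)\Gamma^*=\pi_\mu(g\cdot f)$. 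In particular $\hat U_g\,\hat P(E)\,\hat U_g^*=\hat P(g\cdot E)$ for the (transported) spectral measure $\hat P(E)=\Gamma P(E)\Gamma^*=\bigoplus_n\chi_{E\cap E_n}$. From $\hat U_g\hat P(E)\hat U_g^*=\hat P(g\cdot E)$ applied with $E=E_n$ and the uniqueness clause in Theorem \ref{Hahn}(b) (the multiplicity $n$ is a unitary invariant, so $g_*\mu$-a.e. $g^{-1}\cdot E_n$ has multiplicity $n$), one gets $\mu(E_n\triangle g^{-1}\cdot E_n)=0$ for every $g$; hence each $E_n$ is, modulo $\mu$-null sets, $G$-invariant. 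Because $S$ is the disjoint union of the finitely many orbits $S_j\cong G/H_j$ and each admits (Remark \ref{cross-quasi}) a unique quasi-invariant measure $\mu_j$, the $G$-invariance of $E_n$ forces each $E_n$ (up to $\mu$-null) to be a union of orbit closures; combining this with the preceding Lemma ($\mu\cong\sum_j\mu_j$) lets me rewrite the decomposition so that on each orbit $S_j$ the multiplicity is a single constant $n_j$ and $\mu|_{S_j}\cong\mu_j$.

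Finally I would identify $\hat U$ as a multiplier representation. Restricting the imprimitivity $(S,\hat U,\pi_\mu)$ to each $G$-invariant piece $S_j$ (on which $\hat U$ acts, since $\hat P(S_j)$ reduces $\hat U$), we have a \emph{transitive} system of imprimitivity based on $G/H_j$ with spectral measure of uniform multiplicity $n_j$; by the Imprimitivity theorem quoted above it is unitarily equivalent to the canonical model on $L^2(S_j,\mu_j;\mathcal H_{n_j})$ with $\hat U$ acting by the multiplier formula $(U_gf)(s)=\big(\tfrac{d(g_*\mu_j)}{d\mu_j}(s)\big)^{1/2}\sigma_j(h)f(g^{-1}\cdot s)$ for the appropriate cocycle coming from a representation $\sigma_j$ of $H_j$ and a Borel cross-section. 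Taking the direct sum over $j$ and relabelling the $E_n$'s to match the canonical Hahn–Hellinger form yields a unitary $\Gamma:\mathcal H\to\bigoplus L^2(E_n,\mu;\mathcal H_n)$ with $\Gamma\rho_{\boldsymbol N}(f)\Gamma^*=\pi_\mu(f)$ and $\Gamma U\Gamma^*=\hat U$ a multiplier representation, as required. The main obstacle is the middle step: showing the Hahn–Hellinger sets $E_n$ can be chosen $G$-invariant. This requires carefully combining the uniqueness statement of Theorem \ref{Hahn}(b) with the imprimitivity relation and the finiteness of the orbit decomposition — essentially that the multiplicity function is $G$-invariant $\mu$-a.e. and hence constant on each orbit, which is where quasi-invariance (not full invariance) of $\mu_j$ must be handled with care.
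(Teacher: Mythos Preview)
Your proof is correct and follows the paper's strategy almost exactly: apply Hahn--Hellinger, use the uniqueness clause together with the imprimitivity relation to see that $\mu$ is quasi-invariant and that the multiplicity function is $G$-invariant (hence constant on each orbit $S_j$), and then decompose along the orbits. The only substantive difference is the last step. You restrict the imprimitivity to each transitive piece $S_j$ and invoke Mackey's Imprimitivity Theorem as a black box to get the multiplier form. The paper instead argues directly: it writes down the ``reference'' multiplier representation $V_g^j f(x)=\big(\tfrac{d(\mu_j\circ g^{-1})}{d\mu_j}(x)\big)^{1/2}f(g^{-1}\cdot x)$, notes that $\hat U_g V_g^*$ commutes with every $\hat\rho(f)$, and concludes that $\hat U_g V_g^*$ is multiplication by a Borel unitary-valued function $c_g=\bigoplus_j c_g^j$; this simultaneously shows that the subspaces $L^2(S_j,\mu_j;\mathcal H_{n_j})$ reduce $\hat U$ and that $\hat U_g^j=c_g^j\,V_g^j$ is a multiplier representation with explicit cocycle. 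Your route is tidier to state but imports the full Imprimitivity Theorem (and the extra unitary it produces on each orbit must be absorbed into $\Gamma$, which you gesture at with ``relabelling''); the paper's commutant argument is more self-contained and yields the cocycle factorisation without any further change of model.
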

\begin{proof} Applying the Hahn-Hellinger theorem to the $*$- representation $\rho$ of $C(S)$, we find a disjoint sequence $E_n$, $n\geqslant 1$, of Borel subsets of $S$ such that (i) $S=\bigcupdot_{n=1}^\infty E_n$,   
(ii) $\mathcal H\cong \oplus L^2(E_n, \boldsymbol{\mu} :\mathcal H_n)$, 
and (iii) $\rho(f) \cong \oplus_{n\geqslant 1}\pi^{(n)}(f)$, $f\in C(S)$, where $\pi^{(n)}=\pi_{\left .\mu\right|_{E_n}}^n$ is the canonical $*$- representation of $C(S)$. 

Since $\rho$ and $g\cdot \rho$ are unitarily equivalent, it follows from the uniqueness assertion in the Hahn-Hellinger theorem that $\mu\cong \mu\circ g^{-1}$ and $\mu(E_n \triangle g\cdot E_n) =0$, $g\in G$. Since $\mu$ is quasi-invariant it lives on a union of $G$- orbits, namely, $S_0 \cup S_1 \cup \cdots \cup S_r$.



Let $\hat{n}$ be the multiplicity function defined on $S$, i.e., $\hat{n}: S\to  \mathbb{N}$, $\hat{n}_{|E_n} = n$. By the Hahn-Hellinger theorem, $\hat{n} \circ g = \hat{n}$ for all $g\in G$. It follows that $\hat{n}$ is a constant function on each orbit of $G$. Consequently, $S_j \subseteq E_{n_j}$ for some $n_j$. Let $\boldsymbol{S}_j = \cupdot \{S_k: S_k \subseteq E_{n_j}\}$. Hence $S= \cup \{\boldsymbol{S}_j: j \in \boldsymbol{F}\subseteq \{0,1, \ldots, r\}\}$. 
We also see by the uniqueness portion of the Hahn-Hellinger theorem, that $\mu_{n_j} (E_{n_j} \triangle \boldsymbol{S}_j)=0$. Thus, $\mu_{| \boldsymbol{S}_j} = \mu_{j_1} + \cdots + \mu_{j_k}$, where $\mu_{j_k}$ is the quasi-invariant measure that lives on $S_{j_k}$ with $S_{j_k} \subseteq E_{n_j}$. It follows that modulo unitary equivalence via an  unitary $\Gamma: \mathcal H \to \oplus_{n=1}^\infty L^2(E_n, \mu, \mathcal{H}_n)$ we have  
\begin{align*}
\mathcal{H} \cong \bigoplus_{j\in \boldsymbol{F}} L^2(E_{n_j}\cap\boldsymbol{S}_j, \mu_{|\boldsymbol{S}_j}, \mathcal{H}_{n_j}) =  \bigoplus_{j=0}^r L^2(S_j, \mu_{|S_j}, \mathcal{H}_{n_j}).
\end{align*}
Let $\hat{\rho}= \Gamma \rho \Gamma^*$ and $\hat{U} = \Gamma U \Gamma^*$ and note that the pair $(\hat{\rho}, \hat{U})$ satisfy the imprimitivity condition. For each $j=0,1,\ldots ,r$, define the unitary representation $V_g^j$ on $L^2(S_j,\mu_j, \mathcal H_{n_j})$ by setting 
\[V_g^j \psi (x) = \big ( \tfrac{d(\mu_j \circ g^{-1})}{d\mu_j}(x)\big )^{1/2} \big (\psi\circ g^{-1} \big ) (x),\,x\in S_j,\, \psi \in L^2(S_j, \mu_j, \mathcal{H}_{n_j}).\]
Let $V_g=\oplus_{j=0}^r  V_g^j$. By construction, $(S, \hat{\rho}, V_g)$ is an imprimitivity. Hence 
\[\hat{U}_g^* \hat{\rho} \hat{U}_g = \hat{\rho}(g\cdot f) = V_g^* \hat{\rho} V_g,\, g\in G.\]
Hence the unitary operators $\hat{U}_g V_g^*$, $g\in G$, commute with the set of normal operators $\{\hat{\rho}(f): f\in C(S)\}$. It follows that the reducing subspaces $L^2(S_j, \mu_j, \mathcal{H}_{n_j})$ of the operators $\hat{\rho}(f)$ ($f\in C(S)$)  are also invariant under the unitary operators $ \hat{U}_g V_g^*$  ($g\in G$). Consequently, these reducing subspaces are also left invariant under $\hat{U}_g$. Therefore, $\hat{U}_g$ has the form $\oplus_{j=0}^r \hat{U}^j_g$, where $\hat{U}^j_g$ is the restriction of $\hat{U}_g$ to the subspace $L^2(S_j, \mu_j, \mathcal{H}_{n_j})$. 

Recall that $\hat{U}_gV_g^*$ must be a multiplication by a Borel function $c_g(\cdot)$ defined on $S$ taking values in the unitary operators acting on $\oplus_{j=0}^r \mathcal{H}_{n_j}$. Furthermore, since $L^2(S_j, \mu_j, \mathcal{H}_{n_j})$ are reducing subspaces for $\hat{U}_gV_g^*$, we conclude that
$c_g = \oplus_{j=0}^r c_g^j$. Hence 
$\hat{U}^j_g{V^j}_g^*= \oplus_{j=0}^r c^j_g$. The representation $V^j_g$ is a multiplier representation by construction with multiplier $\big (\tfrac{d (g\cdot \mu_j)}{d\mu_j}\big )^{1/2}$, we have that 
\begin{align*} \hat{U}_g^j f(x) &= c_g^j(x) (V_g f)(x)\\ 
& = \big ( \tfrac{d(\mu_j \circ g^{-1})}{d\mu_j}(x)\big )^{1/2}  c^j_g(x) (g\cdot f)(x), \end{align*}
$x\in S_j$ and $f\in L^2(S_j, \mu_j, \mathcal{H}_{n_j})$. Since $\hat{U}^j_g$ is a homomorphism, it follows that $( \tfrac{d(\mu_j \circ g^{-1})}{d\mu})^{1/2} c^j_g$ is a cocycle. Moreover, $\big (\tfrac{d (g\cdot \mu_j)}{d\mu_j}\big )^{1/2}$ is evidently a cocycle and therefore $c^j_g$ is a cocyle taking values in the group of unitary operators acting on $\mathcal{H}_{n_j}$. 
\end{proof}
We can assert more than what is claimed in the theorem. This is the the corollary below, its proof is apparent from the proof of the theorem. 
\begin{cor} \label{maincor}
Let $\boldsymbol{N}$ be a homogeneous $d$- tuple of commuting normal operators acting on some Hilbert space $\mathcal{H}$ and let $S:=\sigma(\boldsymbol{N})$ be the spectrum of $\boldsymbol{N}$. Assume that $S=\cupdot_{j=0}^r S_j$, where each $S_j$ is a $G$- orbit and is not necessarily compact. Then there exist quasi-invariant measures $\mu_j$ living on $S_j$ such that $\boldsymbol{N}$ is unitarily equivalent to the direct sum of $\boldsymbol{M}^{(j)}$ of the multiplication by the coordinate functions acting on the Hilbert space $L^2(S_j,\mu_j, \mathcal{H}_{n_j})$, $\dim(\mathcal{H}_{n_j})=n_j$, $0 \leqslant j \leqslant r$. 
\end{cor}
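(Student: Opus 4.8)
The plan is to extract the statement essentially verbatim from the proof of Theorem \ref{multR}, together with one routine remark that disposes of the compactness hypothesis on the individual orbits. First I would observe that since $\boldsymbol{N}$ is a $d$-tuple of \emph{bounded} normal operators, $S=\sigma(\boldsymbol{N})$ is automatically compact, so the continuous functional calculus $\rho_{\boldsymbol{N}}:C(S)\to\mathcal{L}(\mathcal{H})$ and the Hahn--Hellinger decomposition invoked in Theorem \ref{multR} are available without change. Inspecting that proof, one sees that it never used that a single orbit $S_j$ is closed in $\mathbb{C}^d$: it used only that $S$ is compact, that the multiplicity function $\hat{n}$ is $G$-invariant (hence constant on each orbit, so that each $S_j$ is contained in a single $E_{n_j}$), and that each orbit carries a quasi-invariant measure unique up to equivalence, all of which hold for arbitrary, not necessarily compact, orbits. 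Thus the conclusion of Theorem \ref{multR} persists with ``$S_j$ a $G$-orbit'' in place of ``$S_j$ a compact $G$-orbit''.

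Next I would read off the asserted equivalence from the unitary produced in the proof of Theorem \ref{multR}. That proof furnishes a unitary $\Gamma:\mathcal{H}\to\bigoplus_{j=0}^r L^2(S_j,\mu_{|S_j},\mathcal{H}_{n_j})$ with $\Gamma\rho_{\boldsymbol{N}}(f)\Gamma^*=\pi_\mu(f)$ for all $f\in C(S)$. Applying this to the coordinate functions $f=z_k$ gives $\Gamma N_k\Gamma^*=\pi_\mu(z_k)$, and on the direct sum $\bigoplus_j L^2(S_j,\mu_{|S_j},\mathcal{H}_{n_j})$ the operator $\pi_\mu(z_k)$ is precisely $\bigoplus_j M^{(j)}_{z_k}$, multiplication by the $k$-th coordinate on each summand. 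Hence $\boldsymbol{N}$ is unitarily equivalent to $\bigoplus_{j=0}^r\boldsymbol{M}^{(j)}$, where $\boldsymbol{M}^{(j)}=(M^{(j)}_{z_1},\dots,M^{(j)}_{z_d})$ acts on $L^2(S_j,\mu_{|S_j},\mathcal{H}_{n_j})$.

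Finally I would arrange that the measures may be taken to be the pre-chosen quasi-invariant measures $\mu_j$. The Hahn--Hellinger decomposition determines the restriction $\mu_{|S_j}$ only up to mutual absolute continuity, and by the lemma preceding Theorem \ref{multR} we have $\mu_{|S_j}\cong\mu_j$. For equivalent $\sigma$-finite measures the assignment $\psi\mapsto(d\mu_{|S_j}/d\mu_j)^{1/2}\,\psi$ is a unitary from $L^2(S_j,\mu_j,\mathcal{H}_{n_j})$ onto $L^2(S_j,\mu_{|S_j},\mathcal{H}_{n_j})$; being itself multiplication by a scalar-valued function, it commutes with every $M_{z_k}$, so composing $\Gamma$ with the direct sum of these unitaries yields the claimed equivalence of $\boldsymbol{N}$ with the direct sum of the coordinate-multiplication tuples on the spaces $L^2(S_j,\mu_j,\mathcal{H}_{n_j})$, with each $\mu_j$ living on $S_j$ by construction. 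There is no real obstacle in this argument beyond bookkeeping; the single point that wants a moment's care is checking that the change-of-measure map intertwines the two coordinate-multiplication tuples, which is immediate precisely because it is a multiplication operator.
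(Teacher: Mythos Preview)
Your proposal is correct and follows precisely the paper's own approach: the paper states that the corollary's ``proof is apparent from the proof of the theorem'' and gives no further argument, and you have simply unpacked that remark, noting (correctly) that the proof of Theorem \ref{multR} never uses compactness of the individual orbits $S_j$ and then reading off the coordinate-multiplication model from the unitary $\Gamma$. Your additional observation about replacing $\mu_{|S_j}$ by the pre-chosen $\mu_j$ via the Radon--Nikodym multiplication unitary is a harmless elaboration the paper leaves implicit.
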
 
We point out that $\mathcal{H}_{n_j}$ may be isomorphic to $\mathcal{H}_{n_k}$ even if $j\ne k$.  
\begin{rem}
    Let $\boldsymbol{N}$ be a homogeneous $d$- tuple. Theorem \ref{multR} ensures that we may take, modulo unitary equivalence and without loss of generality, the unitary representation $U$  intertwining the commuting $d$- tuple $(N_1, \ldots , N_d)$ with $(g_1(\boldsymbol{N}), \ldots , g_d(\boldsymbol{N}))$ to be a multiplier representation. 
    Conversely, assume that the $d$- tuple $\boldsymbol{N}$ of normal operators is realized as multiplication $M_k$, $k=1, \ldots , d$, by coordinate functions $z_k$ on some $L^2(S,\mu,V)$. If there is a multiplier representation $U$ on $L^2(S,\mu,V)$ of the form $f\mapsto c(g,\boldsymbol{z}) f(g\cdot \boldsymbol{z})$, then
\begin{align*}
    \big (M_k U(g) f\big )(\boldsymbol{z}) &= M_k c(g,\boldsymbol{z}) f(g\cdot \boldsymbol{z})\\  
    &= g_k (g\cdot \boldsymbol{z}) c(g, \boldsymbol{z}) f(g\cdot \boldsymbol{z})\\
    &= \big ( U(g) M_{g_k} f\big )(\boldsymbol{z}),\, 1\leqslant k \leqslant d.
\end{align*}
    Thus, $U$ intertwines the commuting $d$- tuple $\boldsymbol{N}$ with $g\cdot \boldsymbol{N}$. Hence $\boldsymbol{N}$ is homogeneous. 
\end{rem}

\section{Examples}
The M\"{o}bius group acts transitively on the open unit disc $\mathbb{D}$ and the unit circle $\mathbb{T}$. 
However although the M\"{o}bius group acts on the closed disc $\overbar{\mathbb{D}}$, this action is no longer transitive. Indeed, $\overbar{\mathbb{D}}$  is the disjoint union of two orbits, namely, $\mathbb{D}$ and $\mathbb{T}$. When the action is transitive, the canonical form of the imprimitivity is described by Mackey. Hence if $N$ is a homogeneous normal operator with $\sigma(N) = \mathbb{T}$, then it must be a $n$- fold direct sum of the operator of multiplication by $\alpha$, $\alpha\in \mathbb{T}$ on $L^2(\mathbb{T}, d\theta)$, where $d\theta$ is the arc length measure. However, it is not obvious what are the homogeneous normal operators $N$ with $\sigma(N) = \overbar{\mathbb{D}}$. These are described explicitly in a forthcoming paper of the first named author with A. Kor\'{a}nyi, see also \cite[Theorem 6.6]{survey}.    

\subsection{The case of a product domain} 
In this section, we describe commuting pairs $\boldsymbol{N}$ of homogeneous normal operators with $\sigma(\boldsymbol{N})=\overbar{\mathbb{D}}\times\overbar{\mathbb{D}}$.
The subset $\mathbb D^2:=\mathbb D \times \mathbb D$ of $\mathbb{C}^2$ is a $G$- space, where
$G$ is the subgroup 
\[\{\phi:= (\phi_1, \phi_2) \mid \phi_k(z_k) = \beta_k \tfrac{z_k - \alpha_k}{1-\overbar{\alpha}_k z_k}, \beta_k\in \mathbb T, \alpha_k \in \mathbb D, k=1,2\}\]
of the group of bi-holomorphic automorphisms of $\mathbb D \times \mathbb D$. The automorphism $\phi$ extends to an automorphism of $\overbar{\mathbb{D}}\times\overbar{\mathbb{D}}$ with $\phi(\partial \mathbb D^2) \subseteq \partial \mathbb D^2$. 
To identify homogeneous (under the $G$- action) pairs of commuting normal operators, we first note that the spectrum of such a pair must be a $G$- invariant compact subset of $\mathbb C^2$. To find these, note that the orbit through a point $(z_1, z_2) \in \mathbb T\times \mathbb D$ is $\mathbb T \times \mathbb D$, similarly,   $\mathbb D \times \mathbb T$ is also a $G$- orbit. If $(z_1, z_2) \in \mathbb T\times \mathbb T$, the  $G$- orbit is $\mathbb T \times \mathbb T$. These are all the $G$- oribits in the boundary of $\mathbb{D}\times \mathbb{D}$.  Closure of these obits gives us compact sets that are G-invariant. Moreover, if $(z_1, z_2)$ is in $\mathbb D^2$, then the $G$- orbit through this point is $\mathbb D^2$. Thus, all the compact $G$- invariant subset of $\mathbb C^2$ are 
\[\overbar{\mathbb D} \times \overbar{\mathbb D},\,\, \mathbb T \times \overbar{\mathbb D},\,\, \overbar{\mathbb D} \times \mathbb T,\,\, \mathbb T \times \mathbb T.\]

Among these, the group $G$ acts transitively only on $\mathbb{T} \times \mathbb{T}$. Consequently, imprimitivities based on $\mathbb{T} \times \mathbb{T}$, or equivalently, associated pairs $\boldsymbol{N}$ of homogeneous normal operators, are described by Mackey's theorem. The remaining three cases can be dealt with using Corollary \ref{maincor}, however, we provide an elementary analysis below.  

If we consider a commuting pair of homogeneous normal operators $\boldsymbol{N}$ with $\sigma_{\boldsymbol{N}} = \overbar{\mathbb D} \times \overbar{\mathbb D}$, then it must be unitarily equivalent to the pair of multiplication operators $\boldsymbol{M}= (M_1, M_2)$ acting on $L^2(\overbar{\mathbb D} \times \overbar{\mathbb D}, \mu, \mathcal H_n)$,  where $\mu$ is quasi-invariant with respect to the group $G$ and $\dim \mathcal{H}=n$. The restriction of the measure $\mu$ to the transitive $G$- space $\mathbb D \times \mathbb D$, $\mathbb D\times \mathbb T$, $\mathbb T \times \mathbb D$ and $\mathbb T \times \mathbb T$ is uniquely determined since the group acts on these transitively. These are the measures: $\mu_1:=dA \times dA$, $\mu_2:=dA \times d\theta$, $\mu_3=d\theta \times dA$ and $\mu_4:=d\theta \times d\theta$, respectively. (Here, $dA$ and $d\theta$ denote the area and the arc length measure, respectively.) Evidently, $\mu = \mu_1 + \mu_2 +\mu_3 + \mu_4$. Moreover, $\mu_i$, $1\leqslant i \leqslant 4$, are mutually singular. Consequently, $L^2(\overbar{\mathbb D} \times \overbar{\mathbb D}, \mu, \mathcal H_n)$ must be a direct sum of the form 
\[L^2(\mathbb D \times \mathbb D, \mu_1, \mathcal H_{n_1}) \oplus L^2(\mathbb D \times \mathbb T, \mu_2, \mathcal H_{n_2}) \oplus L^2(\mathbb T \times \mathbb D, \mu_3, \mathcal H_{n_3})  \oplus L^2(\mathbb T \times \mathbb T, \mu_4, \mathcal H_{n_4}) , \]
where $n=n_1+n_2+n_3+n_4$. These are reducing subspaces of the homogeneous pair of operators $(M_1,M_2)$ acting on $L^2(\overbar{\mathbb D} \times \overbar{\mathbb D}, \mu, \mathcal H_n)$. We have therefore proved that any pair $\boldsymbol{N}$ of homogeneous normal operators with $\sigma(\boldsymbol{N})= \overbar{\mathbb D} \times \overbar{\mathbb D}$ must be unitarily equivalent to the direct sum of the pair $(M_1,M_2)$ of multiplication operators acting on the Hilbert space 
$\bigoplus_{i=1}^4 L^2 (X_i, \mu_i, n_i)$,
where 
$X_1=\mathbb D \times \mathbb D$, $X_2=\mathbb D \times \mathbb T$, $X_3=\mathbb T \times \mathbb D$, $X_4=\mathbb T \times \mathbb T$. 

The two cases where the spectrum of $\boldsymbol{N}$ is either $\mathbb {T}\times \overbar{\mathbb{D}}$ or $\overbar{\mathbb{D}}\times \mathbb{T}$ are described exactly in the same manner. 

\subsection{The case of a bounded symmetric domain} Let $\mathcal D\subset \mathbb C^d$ be a bounded symmetric domain and $G$ be its bi-holomorphic automorphism group. We note that the $G$ action on $\mathcal D$ extends to some open neighbourhood of $\mathcal D \cupdot \partial \mathcal D$.  The $G$ action is transitive on $\mathcal D$. The topological boundary $\partial \mathcal D$ splits into $r$ $G$-orbits, say, $S_0,\ldots , S_{r-1}$, where $r$ is the rank of $\mathcal D$. Also, setting 
\[\mathcal{S}_j = S_0 \cupdot S_1 \cupdot \cdots \cupdot S_j,\]
we note that $\mathcal{S}_j= \overbar{S_j}$, $0\leqslant j \leqslant r-1$. 
Here, $S_0$ is the Silov boundary and is necessarily compact.

The only possibilities for the spectrum of a $d$- tuple of commuting  homogeneous normal operators are compact $G$- invariant subsets of $\mathbb C^d$. These are the  sets: $\overbar{\mathcal{D}}$ and  $\mathcal{S}_j$, $0\leqslant j \leqslant r-1$. Fixing the spectrum of a $d$- tuple of commuting homogeneous normal operators to be one of these sets, we see that the hypothesis of Corollary \ref{maincor} are met. Therefore, we have a complete description of the commuting $d$- tuples of homogeneous normal operators, or equivalently, all the imprimitivities based on the $G$- spaces $\mathcal{S}_j$, $0\leqslant j \leqslant r-1$ and $\overbar{\mathcal{D}}$.


\section{Open problems}

Let $\mathbb{S}$ be a bounded open connected subset of $\mathbb{ C}^d$. Following the well established tradition in Operator theory, we propose to study homomorphisms of the ``disc'' algebra $\mathcal A(\mathbb{S}) \subset C(\overbar{\mathbb{S}})$, where $\mathcal A(\mathbb{S})$ consists of all those complex valued functions $f$ such that there is an open set $U$ containing $\overbar{\mathbb{S}}$ and that $f$ is holomorphic on $U$. Now, pick any algebra homomorphism $\varrho:\mathcal A(\mathbb{S}) \to \mathcal{L}(\mathcal{H})$ and a unitary representation of a locally compact second countable group $G$ on $\mathcal{H}$ and say that $(\mathbb{S}, \varrho, U)$ is a ``holomorphic imprimitivity" if 
\[U(g)^* \varrho(f) U(g) = \varrho(g\cdot f), \, \, g\in G\,\,f\in \mathcal{A}(\mathbb{S}).\]
A homomorphism $\varrho$ as above clearly defines a $d$- tuple of commuting  bounded linear operators $(T_1:=\varrho(z_1), \ldots , T_d:= \varrho(z_d))$ that is \textit{homogeneous}, namely, 
\[U(g)^* ( T_1 , \ldots , T_d) U(g) = ( g_1(T_1), \ldots ,g_d(T_d) ), \, \, g\in G.\]
Here we must assume that the joint spectrum (we take it to be the one defined in the sense of Taylor) of $\boldsymbol{T}$ is contained in $\overbar{\mathbb{S}}$.   

Conversely, starting with a $d$- tuple $\boldsymbol{T}$ of homogeneous commuting  bounded linear operators with $\sigma(\boldsymbol{T}) \subseteq \overbar{\mathbb{S}}$, a homomorphism $\varrho$ is defined by setting $\varrho_{\boldsymbol{T}}(f) = f(\boldsymbol{T})$ via the holomorphic functional calculus. 

A detailed study of the class of algebra homomorphisms $\varrho:\mathcal{A}(\mathbb{S}) \to \mathcal{L}(\mathcal{H})$ that are restrictions of $*$- homomorphisms $\hat{\rho}: B(\mathbb{S}) \to \mathcal{L}(\mathcal{K})$, where $\mathcal{K} \supset \mathcal{H}$ and $\hat{\rho}(f) (\mathcal{H}) \subseteq \mathcal{H}$ has been very fruitful. In this case the commuting $d$- tuple of operators $(T_1, \ldots , T_d)$ is called subnormal. 

 When $\mathbb{S}$ is only locally compact, as pointed out earlier, the $*$- representation $\hat{\rho}$ extends to a $*$- representation of the algebra $B(\mathbb{S})$ of bounded Borel functions on $\mathbb{S}$ and  
 $\mathcal{A}(\mathbb{S})\subset B(\mathbb{S})$. Thus, we can speak of $\hat{\rho}(f)$ for $f\in \mathcal{A}(\mathbb{S})$.
 
The first question in the context of this paper is the following. Suppose that $(\mathbb{S}, \varrho, U)$ is a holomorphic imprimitivity based on $\mathcal{H}$, and  there exists a $*$- homomorphism $\hat{\rho}: C_0(\mathbb{S})\to \mathcal{L}(\mathcal{K})$, $\mathcal{K}\supset \mathcal{H}$, such that (a) $\mathcal{H}$ is invariant for $\hat{\rho}(f)$, $f\in \mathcal{A}(\mathbb{S})$, and (b) $\hat{\rho}_{|\mathcal{H}}(f) = \rho(f)$, $f\in \mathcal{A}(\mathbb{S})$.
Then does it follow that there is a unitary representation $\hat{U}$ of $G$ on $\mathcal{K}$ such that $(\mathbb{S}, \hat{\rho}, \hat{U})$ is an imprimitivity?

Let $(\mathbb{S},\hat{\rho}, U)$ be an imprimitivity. The second question asks for a description of all the simultaneous invariant subspaces of $\hat{\rho}$ and $\hat{U}$. 

The first question has an affirmative answer when $\mathbb{S}$ is either $\overbar{\mathbb{D}}$ or $\mathbb{T}$.
This is in a forthcoming paper of the first named author jointly with A. K\'{o}ranyi. 
In the same paper, although, there are some partial results describing the simultaneous invariant subspaces in the case of  $\overbar{\mathbb{D}}$ and $\mathbb{T}$, the answer to the question of simultaneous invariant subspaces is  far from complete even in this very simple case. 

When the answer to the second question is affirmative, then restricting to the simultaneous invariant subspace we obtain homogeneous subnormal operators. If the answer to the first question is also affirmative, which is very likely, and if we have a description of all the simultaneous invariant subspaces, then all the homogeneous subnormal operators would be the restriction of homogeneous normal operators (or imprimitivities) to these subspaces. 


\end{document}